\newtheorem{theorem}{Theorem}
\newtheorem{lemma}[theorem]{Lemma}	
\newtheorem{corollary}[theorem]{Corollary}
\newtheorem{proposition}[theorem]{Proposition}
\theoremstyle{remark}
\newtheorem*{remark}{Remark}
\DeclareMathOperator{\chr}{char}
\newcommand{\PP}{P}
\title{On the number of intersection points\\ of lines and circles~in~\(\mathbb R^3\)}
\author{Andrey Sergunin\thanks{Laboratory of Combinatorial and Geometric Structures, Moscow Institute of Physics and Technology (National Research University), Russia}}
\date{}
\begin{document}
		
\maketitle

\begin{abstract}
We consider the following question: Given $n$ lines and $n$ circles in $\mathbb{R}^3$, what is the maximum number of intersection points lying on at least one line and on at least one circle of these families. 
We prove that if there are no $n^{1/2}$ curves (lines or circles) lying on an algebraic surface of degree at most two, then the number of these intersection points is $O(n^{3/2})$.
\end{abstract}


\section*{Introduction}

It is easy to see that \(n\) distinct lines in \(\mathbb{R}^3\) can have a quadratic number of intersection points. 
For instance, consider \(n\) lines in general position in the plane. 
Another example is the following. Consider $n$ lines lying on the surface of a hyperboloid with one sheet: $\lfloor n/2 \rfloor$ of these lines belong to one family of generators, $\lceil n/2 \rceil$ of these lines belong to another. 
Note that in this example hyperboloid with one sheet can be replaced by any regulus, that is, the surface spanned by all lines that meet three pairwise skew lines in $\mathbb R^3$. 
A non-trivial upper bound on the number of intersection points of $n$ lines is proven in the following theorem provided that there is no surface of small degree containing many lines. 

\begin{theorem} [Guth--Katz \cite{guth-katz}] \label{Guth-Katz} 
Let $\mathcal{L}$ be a collection of $n$ lines in $\mathbb{R}^3$. 
Let $A \geq 100 n^{1/2}$ and suppose that there are at least $100An$ points incident to at least two lines of $\mathcal{L}$. 
Then there exists a plane or regulus $Z \subset \mathbb{R}^3$ that contains at least $A$ lines from $\mathcal{L}$. 
\end{theorem}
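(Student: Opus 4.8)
The plan is to prove the contrapositive: assuming that no plane and no regulus contains $A$ or more lines of $\mathcal{L}$, I will show that fewer than $100An$ points are incident to at least two lines of $\mathcal{L}$. The whole argument runs by induction on $n$, combining the polynomial method with the classical geometry of ruled surfaces; ``rich point'' below always refers to whatever subfamily is under discussion at the moment. Points lying on three or more lines can be treated first by a simpler polynomial-partitioning argument needing only the hypothesis on coplanar lines (three lines through a smooth point of the partitioning surface lie in its tangent plane there, and too many such configurations would force a plane component to contain many lines), so I may assume every rich point lies on exactly two lines. Then I prune: repeatedly delete any line carrying fewer than $50A$ rich points. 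Each deletion destroys fewer than $50A$ rich points and there are at most $n$ lines, so at most $50An$ of them are lost; starting from at least $100An$, I keep at least $50An$ rich points on a subfamily $\mathcal{L}'$ in which every line carries at least $50A$ of them. Since a line has at most as many rich points as lines meeting it, $|\mathcal{L}'| \ge 50A > A$, so it now suffices to find $A$ lines of $\mathcal{L}'$ on a single plane or regulus; moreover $A \ge 100 n^{1/2} \ge 100|\mathcal{L}'|^{1/2}$ throughout.

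Next I would force all of $\mathcal{L}'$ onto a surface of controlled degree. Applying polynomial partitioning to the surviving rich points with a suitable degree parameter, the cells of the partition account for only $O(An)$ rich points --- a cell-by-cell estimate in which each cell contributes at most quadratically in the number of lines meeting it, combined with the bound on line--cell incidences and the induction on the number of points --- while the rest of the rich points lie on the zero set of the partitioning polynomial. Iterating this together with a degree-reduction step (a polynomial of degree $d'$ vanishing at more than $d'$ points of a line vanishes on the whole line, and every line of $\mathcal{L}'$ is very rich) produces a \emph{squarefree} polynomial $P$, of degree $d$ with $d \ll |\mathcal{L}'|^{1/2}$, whose zero set contains \emph{every} line of $\mathcal{L}'$. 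Keeping $d$ this small is exactly where the hypothesis $A \gg n^{1/2}$ is spent.

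Now the algebraic geometry. A line lying in a surface is flecnodal, so every line of $\mathcal{L}'$ lies in the zero set of the flecnode polynomial $\mathrm{Fl}(P)$, of degree at most $11d - 24$. Factor $P = P_1 \cdots P_k$ into irreducibles. If $Z(P_i)$ is not a component of $Z(\mathrm{Fl}(P))$, then $Z(P_i) \cap Z(\mathrm{Fl}(P))$ is a curve of degree at most $\deg(P_i) \cdot \deg(\mathrm{Fl}(P))$, hence contains at most that many lines; summing, the lines of $\mathcal{L}'$ on such ``exceptional'' components total $O(d^2) \ll |\mathcal{L}'|$, so they form a strictly smaller instance, handled by the induction (if some plane or regulus contained $A$ of them we would already be done). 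If instead $Z(P_i)$ \emph{is} a component of $Z(\mathrm{Fl}(P))$, then by the Cayley--Salmon theorem $Z(P_i)$ is ruled. An irreducible ruled surface through whose generic point pass two or more of its own lines is classically a plane or a regulus; by hypothesis each such component carries fewer than $A$ lines, and the two-rich points lying on the union of the plane and regulus components are easily seen to be $O(An)$ in number ($O(A\,|\mathcal{L}'|)$ from coplanar pairs of lines, $O(|\mathcal{L}'|\,d)$ from transversals). Every remaining ruled component is \emph{singly} ruled --- a unique line of the surface through a generic smooth point --- so the two-rich points among the lines on it are confined to a one-dimensional set (the singular locus, a bounded number of exceptional lines, and for cones the apex), and a direct count with the degree of that locus bounds their total by $o(An)$. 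Adding the four contributions yields fewer than $100An$ two-rich points, a contradiction.

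The main obstacle, I expect, is the passage of the previous two paragraphs taken together: one must have the flecnode/Cayley--Salmon theory available over $\mathbb{R}$ (via complexification), must prove a clean bound for the number of two-rich points on an arbitrary singly ruled surface of prescribed degree, and --- most delicately --- must arrange the polynomial partitioning and degree reduction so that $d$ is at once large enough to admit a polynomial through all of $\mathcal{L}'$ and small enough, $d \ll |\mathcal{L}'|^{1/2}$, for both the B\'ezout count $O(d^2)$ of exceptional lines and the induction to make genuine progress. Getting the four error terms to sum to something below the sharp bound $100An$, rather than merely $O(An)$ with an unspecified constant, is where most of the bookkeeping lives.
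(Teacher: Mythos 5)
This statement is quoted background: the paper takes it from Guth--Katz and gives no proof of it (the paper's own contribution, Theorem~\ref{lines-circles}, is proved by an analogous but separate scheme), so your proposal can only be judged against the original Guth--Katz argument. Judged that way, your outline reproduces the correct architecture --- contrapositive, pruning so that every surviving line is $\Omega(A)$-rich, degree reduction placing all surviving lines on a surface of degree $d\ll n^{1/2}$ (in fact $d=O(n/A)$), the flecnode polynomial and Cayley--Salmon to force ruled components, ``doubly ruled $\Rightarrow$ plane or regulus'', and a component-by-component count --- but the steps you assert are precisely the substantive content of that proof, and as written they are gaps rather than arguments.

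Concretely: (i) the count on singly ruled components is the crux, and your justification fails as stated. You claim the two-rich points there are ``confined to a one-dimensional set (the singular locus, a bounded number of exceptional lines, and for cones the apex)'' and that ``a direct count with the degree of that locus'' gives $o(An)$; what is actually needed is a quantitative lemma in the spirit of Proposition~\ref{exceptional lines} plus a bound on how many other lines of the surface a non-exceptional line can meet, yielding something like $O(|\mathcal{L}_i|\,d_i + d_i^{3})$ intersection points per component, so that summing with $\sum_i d_i\le d=O(n/A)$ and $A\ge 100\,n^{1/2}$ gives $\ll An$. Nothing in your sketch produces such a bound, and a per-line estimate of order $d_i^{2}$ (which is what a naive B\'ezout count gives) is \emph{not} sufficient: it leads to $n d^{2}\sim n^{3}/A^{2}$, which can exceed $An$. (ii) Your disposal of points on three or more concurrent lines as ``a simpler polynomial-partitioning argument'' is not simpler: in Guth--Katz this is a separate theorem with its own machinery (flat points, plane-detection polynomials), and your one-line reason (three lines through a smooth point lie in the tangent plane) applies only to lines \emph{contained in} the partitioning surface, not to arbitrary concurrent lines of $\mathcal{L}$. (iii) The degree-reduction step producing a squarefree $P$ of degree $d\ll|\mathcal{L}'|^{1/2}$ containing every line is asserted, not performed; the actual mechanism (a random sample of lines, a polynomial of degree $O(n/A)$ through the sample, and B\'ezout to absorb every rich line --- exactly the device this paper re-implements in Proposition~\ref{weak proposition}) is where the hypothesis $A\ge 100\,n^{1/2}$ is spent, and it must be carried out for the later B\'ezout and induction steps to make sense. (iv) Finally, the explicit constants in the statement ($100An$ rich points force $A$ lines on a plane or regulus) are never reached, as you acknowledge. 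So: right roadmap, but the two or three counting lemmas that constitute the proof are missing.
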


In \cite{guth-katz} Theorem \ref{Guth-Katz} was applied to prove the lower bound in old difficult Erd\H os' problem about the number of distinct distances between points on the plane.

A more general result was proven for curves of arbitrary degree in \cite{guth-zahl}. 

\begin{theorem} [Guth--Zahl \cite{guth-zahl}] \label{Guth-Zahl}
Let $D > 0$. 
Then there are constants $c_1, C_1, C_2>0$ so that the following holds.
Let $k$ be a field and let $\mathcal{L}$ be a collection of $n$ irreducible curves in $k^3$ of degree at most $D$. 
Suppose that $\chr(k) = 0$ or $n \leq c_1 (\chr(k))^2$.
Then for each $A \geq C_1n^{1/2}$, either there are at most $C_2An$ points in $k^3$ incident to two or more curves from $\mathcal{L}$, or there is an irreducible surface $Z$ of degree at most $100D^2$ that contains at least $A$ curves from $\mathcal{L}$.

\end{theorem}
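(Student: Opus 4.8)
The plan is to run a polynomial partitioning argument in the spirit of Guth--Katz, with the ``cell'' estimates rephrased for irreducible curves of degree at most $D$ instead of lines, and with the Salmon--Cayley analysis of line-ruled surfaces replaced by a structure theorem for surfaces that carry many low-degree curves. First I would reduce to the case $k = \bar k$: a point of $k^3$ incident to two curves of $\mathcal L$ stays incident to them after base change, the curves stay irreducible of degree $\le D$, and the hypothesis on $\chr(k)$ is unchanged. Passing to $\mathbb P^3$ makes the Bézout-type inputs clean --- two distinct irreducible curves of degree $\le D$ meet in at most $D^2$ points, and a curve of degree $\le D$ not contained in a surface of degree $e$ meets it in at most $eD$ points.

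Next comes partitioning. For a parameter $d \ge 2$ there is a nonzero $Q$ of degree $O(d)$ whose zero set $Z_Q$ splits $\mathbb P^3$ into $O(d^3)$ cells, each meeting $O(nD/d^2)$ of the curves not contained in $Z_Q$; every rich point lies on $Z_Q$ or in a single cell. A cell together with the curves through it is a smaller instance of the problem, so an induction on $n$ gives, cell by cell, \emph{either} few rich points \emph{or} a surface of degree $\le 100D^2$ containing $\ge A$ of the curves through that cell --- and the latter already finishes the proof. Summing the good case over the $O(d^3)$ cells yields an $O(An)$ contribution once $d$ is chosen appropriately, so all the remaining difficulty sits on the partitioning surface $Z_Q$.

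To treat $Z_Q$, write it as a union of irreducible components $W$ with $\sum_W \deg W = O(d)$. A curve of $\mathcal L$ crossing a component $W$ meets it in $\le D\deg W$ points, and summing these crossing incidences is controlled by $\sum_W \deg W$; so the real question is an irreducible surface $W$ together with the curves it contains. Here is the crux. I would build a flecnode-type auxiliary polynomial $F_W$ of degree $O_D(\deg W)$ vanishing at every point of $W$ where some curve of degree $\le D$ osculates $W$ to high order. Since $F_W$ vanishes identically on each curve lying in $W$, if $W$ contains more than $(\deg F_W)(\deg W) = O_D((\deg W)^2)$ of them and $\deg W > 100D^2$, then $F_W$ vanishes on all of $W$; a Gauss-map / osculation analysis then forces $W$ to be doubly ruled by degree-$\le D$ curves, and the classification of such surfaces --- the curve analogue of the fact that planes and quadrics are the only doubly line-ruled surfaces --- forces $\deg W \le 100D^2$, a contradiction. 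So each component either has degree $\le 100D^2$ with $\ge A$ curves (done) or contains only $O_D((\deg W)^2)$ curves, whose pairwise intersections produce few rich points after summing over components. Carrying out this flecnode construction with a degree that is genuinely $O_D(\deg W)$, and keeping it valid in characteristic $p$ under $n \le c_1 p^2$, is the step I expect to be the main obstacle: in small characteristic the derivatives that define $F_W$ can degenerate and the osculation count can fail, which is precisely why the hypothesis on $\chr(k)$ is needed.

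Finally one counts the rich points of $Z_Q$ lying on \emph{none} of the curves contained in $Z_Q$: these sit on the $O(nD/d^2)$-or-fewer curves per component that merely cross $Z_Q$, so they are bounded by a routine two-dimensional incidence estimate summed against $\sum_W \deg W = O(d)$. Assembling the contributions --- $O(d^3)$ times the cell bound, $O(d)$ times the surface bound, plus the crossing-curve incidences --- and choosing $d$ so that these balance at $O(An)$, one concludes that either the number of rich points is $O(An)$ or the structural alternative was triggered in some cell or on some component of $Z_Q$. Tracing constants through these estimates then gives admissible $c_1, C_1, C_2$ and pins down the exponent in the bound $100D^2$.
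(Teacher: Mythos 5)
First, a point of orientation: the paper you are commenting on does not prove Theorem~\ref{Guth-Zahl} at all --- it is quoted verbatim from Guth--Zahl \cite{guth-zahl} as background, so there is no ``paper's own proof'' to match; your proposal has to be judged against the actual Guth--Zahl argument. Measured that way, there is a genuine gap at the very first structural step. The theorem is stated for an arbitrary field $k$ (with the hypothesis $\chr(k)=0$ or $n\le c_1(\chr k)^2$ precisely so that it applies over finite fields and over $\mathbb C$), and after your reduction to $k=\bar k$ you invoke polynomial partitioning: a polynomial $Q$ of degree $O(d)$ whose complement splits into $O(d^3)$ cells each meeting few curves. Partitioning is a real-variable tool --- its existence rests on the polynomial ham--sandwich theorem, i.e.\ on Borsuk--Ulam over $\mathbb R$ --- and has no analogue over $\bar{\mathbb F}_p$ or $\mathbb C$, where the complement of a hypersurface is a single Zariski-connected piece and points cannot be separated into cells with few curves each. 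So the backbone of your induction (cellular decomposition plus per-cell recursion) is simply unavailable in the generality the statement demands; at best it could address the case $k=\mathbb R$, which is not the theorem being claimed.

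The second half of your sketch is closer in spirit to what Guth and Zahl actually do, but you have located the difficulty without resolving it. Their proof replaces partitioning by a degree-reduction step (random sampling of curves plus interpolation and B\'ezout, in the style of Guth--Katz and exactly parallel to Proposition~\ref{weak proposition} of the present paper), and the heart of the matter is a theory of surfaces \emph{doubly ruled by curves of degree at most $D$}: they show such surfaces have degree $\le 100D^2$, and that an irreducible surface which is not doubly ruled in this sense contains only $O_D((\deg W)^2)$ of the curves and contributes few $2$-rich points. Crucially, the ``flecnode-type polynomial of degree $O_D(\deg W)$'' you posit is not constructed by differentiating an osculation condition --- in positive characteristic that construction degenerates, as you yourself note --- but via constructible conditions, Chevalley's theorem and generic-point arguments, which is where the hypothesis $n\le c_1(\chr k)^2$ actually enters. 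As written, your proposal assumes both the existence of that auxiliary polynomial and the classification of surfaces doubly ruled by degree-$\le D$ curves, which together constitute the main content of \cite{guth-zahl}; combined with the inapplicable partitioning framework, the proposal does not yield a proof of the stated theorem.
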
 

In the current paper we prove an analogous result about the number of intersection points between lines and circles in $\mathbb{R}^3$ claiming that an irreducible surface $Z$ in this case is either a plane or a hyperboloid with one sheet.

For a collection $\mathcal L$ of lines in $\mathbb R^3$ and a collection $\mathcal C$ of circles in $\mathbb R^3$, denote by $\PP(\mathcal{L}, \mathcal{C})$  the set of points lying on at least one line of $\mathcal{L}$ and at least one circle of $\mathcal{C}$.

\begin{theorem} \label{lines-circles}
Let $\mathcal{L}$ be a collection of $n$ lines and $\mathcal{C}$ be a collection of $m$ circles in $\mathbb{R}^3$.
Then for $A \ge 10^5 \min(n, m)^{1/2}$, either $\PP(\mathcal{L}, \mathcal{P}) \le 1000A (n + m)$ or there is a plane or a hyperboloid with one sheet containing at least $A$ curves of $\mathcal{L} \cup \mathcal{C}$.
\end{theorem}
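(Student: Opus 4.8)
The plan is to imitate the polynomial-partitioning arguments behind Theorems~\ref{Guth-Katz} and~\ref{Guth-Zahl}, carrying the two families $\mathcal L$ and $\mathcal C$ along separately, and then to refine the high-degree surface that the general machinery produces down to one of degree at most two. By symmetry of the hypothesis we may assume $m\le n$; set $N=|\PP(\mathcal L,\mathcal C)|$. Since a line meets a circle in at most two points, $N\le 2nm$, so if $A\ge n+m$ the structural alternative is vacuous and $2nm<(n+m)^2\le 1000A(n+m)$, while if $m$ is below an absolute constant then $A\ge 10^5m^{1/2}$ already forces $2nm<1000A(n+m)$. Hence we may assume $A<n+m$ (so $A^2<A(n+m)$) and that $m$, and therefore $n$, is large. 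Assume for contradiction that $N>1000A(n+m)$ and that no plane or hyperboloid of one sheet carries $A$ of the curves; we argue by induction on $n+m$.

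Apply the Guth--Katz polynomial partitioning theorem to $\PP(\mathcal L,\mathcal C)$ to obtain a real polynomial $g$ of degree at most $d$ (a parameter, of order $(n+m)/A$ in the principal range) whose zero set $Z(g)$ cuts $\mathbb R^3$ into $O(d^3)$ open cells, each meeting at most $N/d^3$ points of $\PP$; a line not lying in $Z(g)$ crosses at most $d+1$ cells, and a circle not lying in $Z(g)$ at most $2d+1$. Write $\PP=\PP_{\mathrm{cell}}\sqcup\PP_{\mathrm{surf}}$, split according to whether a point lies in a cell or on $Z(g)$; one part carries at least $N/2$ of the points. If it is $\PP_{\mathrm{cell}}$, then inside a cell with $n_\Omega$ lines and $m_\Omega$ circles one plays off the trivial bound $|\PP\cap\Omega|\le 2n_\Omega m_\Omega$, the point bound $|\PP\cap\Omega|\le N/d^3$, and the inductive bound (if the inductive hypothesis fails in some cell, that cell already exhibits the forbidden surface) against $\sum_\Omega n_\Omega\le(d+1)n$, $\sum_\Omega m_\Omega\le(2d+1)m$ and the $O(d^3)$ cell count, reaching a contradiction for a suitable $d$. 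This is the routine half of the argument; the care is in tracking which of the two families is the bottleneck — ensuring the cells really see fewer curves requires choosing $g$ with the curves, not just the points, in mind — and it is this asymmetry that produces the threshold $\min(n,m)^{1/2}$ rather than $(n+m)^{1/2}$.

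The substance is the case that $\PP_{\mathrm{surf}}$ is large. Points of $\PP_{\mathrm{surf}}$ coming from a line or circle transversal to $Z(g)$ number only $O(d(n+m))$, so at least $cN$ points of $\PP_{\mathrm{surf}}$ (for an absolute constant $c>0$) lie on curves \emph{contained} in $Z(g)$. If some irreducible component of $Z(g)$ contains $A$ of the curves of $\mathcal L\cup\mathcal C$, we must show that component is a plane or a quadric (a hyperboloid of one sheet in the principal case); otherwise the count of $\PP$-points on $Z(g)$ is too small and the contradiction is reached. Here one invokes, on the line side, the Guth--Katz structure theory (an irreducible surface carrying many lines is a plane, a quadric, a cone, or a cylinder), and on the circle side the classical fact that an irreducible surface carrying many circles has degree at most two or is a cyclide-type surface of degree at most four. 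A component that is, say, a sphere or a cyclide carries essentially no lines, so the $\PP$-points on it are bounded by the transversal line-incidences $O(dn)$ and are negligible; a component carrying $A$ lines and $A$ circles at once — or, more robustly, carrying $A$ curves a positive fraction of which produce $\PP$-points — must be a plane or a quadric, which one sees by imposing the second-order contact conditions forced by having simultaneously a line and a circle through each of these points.

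The main obstacle is exactly this last classification: transplanting the ``flat/ruled point'' differential argument from lines to circles. For lines one exploits that a surface containing a line through a point in two independent directions, at many points, must be flat or doubly ruled; a circle, being non-linear, couples to the second fundamental form in a more complicated way, and the surfaces swept by many circles form a genuinely richer family (spheres, tori, Dupin cyclides). The heart of the proof is to use the presence of many lines to collapse that family back to planes and quadrics while keeping the circle-rich, line-poor components harmless via the cheap transversal bound; dealing with the unbalanced regime $n\gg m$, by partitioning with respect to the smaller family and carrying the asymmetric cell counts, is the remaining technical nuisance.
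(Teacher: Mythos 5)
Your proposal does not contain the step that constitutes the actual content of this theorem, and the fact you substitute for it is false. You correctly isolate the crux (``a component carrying $A$ lines and $A$ circles at once \dots must be a plane or a quadric''), but then you only gesture at it via ``second-order contact conditions'' and a purported classical fact that an irreducible surface carrying many circles has degree at most two or is a cyclide of degree at most four. That fact is wrong: canal and tube surfaces (envelopes of one-parameter families of spheres, e.g.\ tubes around space curves) have arbitrarily large degree and contain a circle through every point, hence contain far more than $A$ circles. So the circle side of your classification collapses, and with it the claim that circle-rich, line-poor components are ``harmless'' plus the claim that mixed components are quadrics. Since you yourself label this classification ``the main obstacle'' and ``the heart of the proof'' without supplying an argument, the proposal has a genuine gap precisely where the paper does its work. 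For comparison, the paper's route is: a minimal-degree real surface containing \emph{all} curves is produced by a random-sampling degree-reduction argument (Proposition~\ref{weak proposition}), its irreducible components are shown to be ruled because they contain more lines than Corollary~\ref{bound on number of lines} (Cayley--Salmon) permits for a non-ruled surface; a real ruled component cannot contain the absolute conic, so at least five of its complex circles pass through one pair of points of the (at most $2d_i$) intersections with the absolute conic and hence lie in parallel planes; Lemma~\ref{intersection with everything} forces every generating line to meet all five circles, which pins the component's equation into the $16$-dimensional family $(x^2+y^2)p_1(z)+xp_2(z)+yp_3(z)+p_4(z)$ of degree at most four; finally the line $z=w=0$ at infinity is analysed (exceptional vs.\ generating, via Proposition~\ref{exceptional lines} and Theorem~\ref{parameterisation}) to exclude degrees three and four, leaving a plane or a hyperboloid with one sheet, after which a short B\'ezout/counting argument gives the bound $1000A(n+m)$. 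Nothing equivalent to the absolute-conic/parallel-planes/degree-four-family analysis appears in your sketch.

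A secondary remark: your first stage (polynomial partitioning into cells plus induction) genuinely differs from the paper, which never partitions into cells but instead covers all curves by one low-degree surface and then by at most $500\min(n,m)/A$ quadrics (Proposition~\ref{main proposition}), finishing with a direct count. A cell-based first stage could in principle be made to work, but as written it is also only a sketch: the asymmetric bookkeeping you acknowledge (choosing the partition degree relative to $\min(n,m)$ so that the transversal term $O(dn)$ in the unbalanced regime $n\gg m$ does not swamp $A(n+m)$) is exactly the point where the threshold $10^5\min(n,m)^{1/2}$ has to be earned, and the paper earns it with a careful random-sampling and matching argument (Lemma~\ref{small degree surface}) that has no counterpart in your outline. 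Even granting that stage, note that ``plane or quadric'' is not yet the theorem's conclusion: one must still rule out quadrics such as the hyperbolic paraboloid (which contains no circles), as the paper's degree-two family analysis does.
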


\begin{remark}
It is known that a hyperboloid with one sheet in $\mathbb{R}^3$ can be defined in some Cartesian coordinate system by an equation \( a x^2 + b y^2 - c z^2 = r \), where $a, b, c, r > 0$.
If $c = 0$ and $a, b, r > 0$, then this equation defines elliptic cylinder, and if $r = 0$ and $a, b, c > 0$, then it defines elliptic cone. 
In the current paper, we consider elliptic cones and cylinders as hyperboloids with one sheet.
Each of these surfaces contains infinite families of circles and lines in $\mathbb{R}^3$.
\end{remark}

In \( \mathbb{C}^3 \) there are surfaces of degree four containing families of generating lines and circles; we refer a reader to the definition of complex circle given after Corollary \ref{bound on number of lines}. 
One of the examples is the surface defined by the equation \((x^2 + y^2 + z^2)^2 + (x + iy)^2 - z^2 = 0\) (see \cite[Example~3.6]{nilov-skopenkov}).
It can be parametrised in \( \mathbb{CP}^3 \) as \( t^2 - 1 : i (t^2 - 1 - 2st) : s(t^2 + 1) : s(t^2 - 1) + 4t \). This surface contains the family of lines \(t = \mbox{const} \) and the family of circles \( s = \mbox{const} \).
Any $n$ lines and $m$ circles on this surface have $nm$ intersections.
Thus Theorem \ref{lines-circles} is incorrect in $\mathbb{C}^3$.  


Our proof is based on two ideas. 
The first one is to find a surface of small degree containing all curves of our collections.
This stage is similar to the idea developed by Guth and Katz in~\cite{guth-katz} to prove Theorem~\ref{Guth-Katz}.
The second one is to show that each irreducible component of the surface constructed in the previous step is ruled and has many circles on it lying in parallel planes. These properties of the irreducible components makes it possible to claim that each of them has degree at most two.
In the similar way this idea is presented in the work~\cite{nilov-skopenkov} of Nilov and Skopenkov, where the authors proved that if through each point of a surface in \(\mathbb{R}^3\) one can draw both a straight line segment and a circular arc, then this surface is a part of algebraic surface of degree at most two. 

The paper is organised as follows. 
In Section \ref{preliminaries} we state auxiliary facts from algebraic geometry. 
Next, in Section \ref{proof of theorem} we show Theorem \ref{lines-circles} using Proposition~\ref{main proposition}, which is proved in Section \ref{proof of propositions}.

\section{Preliminaries} \label{preliminaries}


Let $\mathbb{CP}^3$ be the three-dimensional complex projective space with homogeneous coordinates $x : y : z : w$. 
The \textit{infinitly distant plane} is the plane defined by the equation $w = 0$. 
We consider only algebraic surfaces in $\mathbb{R}^3$ or $\mathbb{CP}^3$, and for this reason we usually omit word ``algebraic''. 
From now on in this section, by $k^3$ we denote $\mathbb{R}^3$ or $\mathbb{CP}^3$. 

A surface $Z \in k^3$ is called \textit{ruled} if every point $p \in Z$ is incident to a line $\ell \in Z$. 
A surface is \textit{doubly ruled} if every its point of it is incident to two distinct lines contained in~\( Z \). It is well-known that if an algebraic surface in \( k^3 \) is doubly ruled, then it is a plane or a regulus. 
Ruled surfaces distinct from planes or reguli are called \textit{singly ruled}.

Our main tool to prove that some surface is ruled is the following theorem.

\begin{theorem}[Cayley--Salmon \cite{salmon}, Monge \cite{monge}] \label{Cayley-Salmon}
If $S \subset \mathbb{CP}^3$ is a surface of degree $d$, which does not contain irreducible ruled components, then there is a surface $T$ of degree at most $11d - 24$ such that $S$ and $T$ do not have common irreducible components and each line of $S$ is contained in $T$.
\end{theorem}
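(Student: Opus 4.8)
The plan is to prove Theorem~\ref{Cayley-Salmon} by the classical route of the \emph{flecnode polynomial}: given a defining polynomial $P$ of $S$ (which I may assume square-free, reducing to the irreducible components of $S$), I will construct a polynomial $\operatorname{Fl}_P$ of degree at most $11d-24$ and take $T = Z(\operatorname{Fl}_P)$. To set this up, for a point $x$ and a direction $v$ expand $P(x+tv) = \sum_{j\ge 0} t^j\, Q_j(x,v)$, where $Q_j(x,v) = \tfrac{1}{j!}(v\cdot\nabla)^j P(x)$ is homogeneous of degree $j$ in $v$ and of degree $d-j$ in $x$. The line through $x \in S$ in direction $v$ meets $S$ with contact order at least $4$ exactly when $Q_0 = Q_1 = Q_2 = Q_3 = 0$; such a pair is a \emph{flecnode}. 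Since $Q_1(x,v)=\nabla P(x)\cdot v$ is linear, it confines $v$ to the tangent plane, on which $Q_2$ and $Q_3$ restrict to a binary quadratic and a binary cubic in one projective parameter. These share a root precisely when their resultant vanishes, and I define $\operatorname{Fl}_P(x)$ to be this resultant (with the spurious factors introduced by a non-intrinsic parametrisation of the tangent plane cleared out). By construction $\operatorname{Fl}_P$ vanishes at every flecnode of $S$.

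I would then carry out two short steps. For the \emph{degree bound}, substitute an explicit spanning pair of tangent directions, for instance $v_1=(\partial_2 P,-\partial_1 P,0)$ and $v_2=(\partial_3 P,0,-\partial_1 P)$, which satisfy $\nabla P\cdot v_i=0$; this turns $Q_2,Q_3$ into binary forms whose coefficients are polynomials in $x$ of controlled degree, and the resultant of a binary quadratic with a binary cubic is of bidegree $(3,2)$ in their coefficients. A naive count overshoots $11d-24$ because the chosen parametrisation forces extraneous powers of the $\partial_i P$ into the resultant; dividing these out gives exactly $\deg\operatorname{Fl}_P \le 11d-24$. This is Salmon's bookkeeping~\cite{salmon}, which I would follow rather than reprove. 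For the \emph{containment of lines}, let $\ell\subset S$ be a line with direction $v$; then for every $x\in\ell$ one has $P(x+tv)\equiv 0$ as a polynomial in $t$, so $Q_0=Q_1=Q_2=Q_3=0$ and $(x,v)$ is a flecnode. Hence $\operatorname{Fl}_P$ vanishes on the dense set of smooth points of $\ell$, and therefore on all of $\ell$, so every line of $S$ lies in $T$.

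The remaining and genuinely hard point is to show that \emph{$S$ and $T$ have no common irreducible component}, and here lies the main obstacle. Suppose some irreducible component $S_0 = Z(P_0)$ of $S$ satisfied $S_0\subseteq Z(\operatorname{Fl}_P)$. At a generic smooth point of $S_0$ the vanishing of $\operatorname{Fl}_P$ forces a flecnodal direction, so \emph{every} point of $S_0$ is a flecnode; the Cayley--Salmon/Monge assertion to be established is that an irreducible surface all of whose points are flecnodes must be ruled, contradicting the hypothesis that $S$ has no ruled component. I expect this ruledness statement to be the crux of the whole argument. The intended mechanism is Monge's: on $S_0$ the flecnodal direction field defines (after choosing an algebraic branch of the asymptotic foliation) integral curves $s\mapsto p(s)$ with tangent $v(s)$, along which the contact conditions $Q_1=Q_2=Q_3=0$ hold identically. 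Differentiating the identity $P(p(s))\equiv 0$ along the curve and feeding in the flecnodal relations, one shows that the contact order of the flecnodal line with $S_0$ is forced up beyond the already-known value $4$, step by step; once it exceeds $d$, the restriction of $P$ to that line vanishes identically and the line lies in $S_0$. Since this produces a line of $S_0$ through a generic point, and the locus of points of $S_0$ lying on a line contained in $S_0$ is closed, it is all of $S_0$, so $S_0$ is ruled.

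Granting this core lemma, the argument closes quickly. No component of $S$ lies in $Z(\operatorname{Fl}_P)$, so in particular $\operatorname{Fl}_P\not\equiv 0$ and $S$ and $T=Z(\operatorname{Fl}_P)$ share no irreducible component; together with the second step of the previous paragraph, $T$ contains every line of $S$ and has degree at most $11d-24$, which is exactly the assertion of the theorem. I would handle the degenerate cases (points where the two asymptotic directions coincide, singular points of $S$, or $\nabla P$ vanishing along a component) by the usual genericity/closure arguments, noting that developable and conical surfaces are ruled in any case and so are excluded by hypothesis. The two technical loads are thus isolated: the degree computation (classical, to be cited) and the ruledness of an everywhere-flecnodal surface (the main obstacle, following Monge~\cite{monge} and Salmon~\cite{salmon}).
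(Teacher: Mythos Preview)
The paper does not prove this theorem: it is stated as a classical preliminary result with references to Salmon~\cite{salmon} and Monge~\cite{monge}, and is used only as a black box (via Corollary~\ref{bound on number of lines}) to bound the number of lines on a non-ruled surface. There is therefore no proof in the paper to compare your proposal against.

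That said, your outline is the standard classical argument and is essentially correct as a sketch. The flecnode polynomial construction, the degree count giving $11d-24$, the observation that every line on $S$ consists of flecnodes, and the identification of the crux---that an irreducible surface which is everywhere flecnodal must be ruled---are all the right ingredients, and your attribution of the last step to Monge's differential argument is accurate. If you were to flesh this out, the points that need the most care are exactly the ones you flag: making the ``divide out the spurious factors'' step in the degree computation precise (this is where $11d-24$ rather than a larger bound comes from), and handling the branching of the asymptotic direction field rigorously in the ruledness argument. For the purposes of this paper, however, citing the result as the author does is entirely appropriate.
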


Under the conditions of Theorem~\ref{Cayley-Salmon}, the degree of the intersection curve of surfaces $S$ and $T$ does not exceed $d(11d - 24)$. Therefore, we obtain the following corollary. 

\begin{corollary} \label{bound on number of lines}
Let $S \subset \mathbb{CP}^3$ be a surface of degree $d$, which does not contain irreducible ruled components. Then $S$ contains at most $d(11d - 24)$ lines.
\end{corollary}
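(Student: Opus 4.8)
The plan is to derive this corollary by combining Theorem~\ref{Cayley-Salmon} with B\'ezout's theorem, exactly as indicated in the paragraph preceding the statement. The first step is to observe that the hypothesis on $S$ is precisely the one required to invoke the Cayley--Salmon theorem: since $S$ contains no irreducible ruled component, Theorem~\ref{Cayley-Salmon} produces a surface $T \subset \mathbb{CP}^3$ of degree at most $11d - 24$ such that $S$ and $T$ have no common irreducible component and every line contained in $S$ is also contained in $T$. (In particular this forces $d \geq 3$ whenever $S$ actually contains a line, since every irreducible surface of degree at most $2$ in $\mathbb{CP}^3$ is ruled; so the quantity $11d-24$ is nonnegative in any nontrivial case and the bound is meaningful.)

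The second step is to intersect $S$ with $T$. Because $S$ and $T$ share no irreducible component, the intersection $X := S \cap T$ is a (possibly reducible) algebraic curve in $\mathbb{CP}^3$, and by B\'ezout's theorem its degree satisfies $\deg X \leq d(11d - 24)$. Every line $\ell \subset S$ lies in both $S$ and $T$ by the conclusion of Theorem~\ref{Cayley-Salmon}, hence $\ell \subseteq X$. Thus it suffices to bound the number of lines contained in a curve of degree at most $d(11d-24)$.

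The third step is the elementary counting fact: if $\ell_1, \dots, \ell_k$ are pairwise distinct lines all contained in $X$, then $\ell_1 \cup \cdots \cup \ell_k$ is a subcurve of $X$ whose degree equals $k$, so $k \leq \deg X \leq d(11d - 24)$. Since every line of $S$ is one of the $\ell_i$, this gives the claimed bound on the number of lines in $S$.

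I do not expect a genuine obstacle here; the substantive content is entirely packaged inside Theorem~\ref{Cayley-Salmon}. The only points demanding a little care are: checking that the hypothesis of Theorem~\ref{Cayley-Salmon} matches ours verbatim (it does); the dimension statement that two surfaces without a common irreducible component meet in a curve, which is what licenses the clean B\'ezout bound $\deg(S\cap T)\le \deg S\cdot\deg T$; and the remark that a union of $k$ distinct lines is a curve of degree exactly $k$, so a curve of degree $e$ contains at most $e$ lines.
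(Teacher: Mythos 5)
Your proposal is correct and follows the same route as the paper: apply Theorem~\ref{Cayley-Salmon}, note that $S$ and $T$ share no component so B\'ezout bounds the degree of the intersection curve by $d(11d-24)$, and count the lines of $S$ as components of that curve. The extra remark that $d\ge 3$ when $S$ contains a line is a harmless addition not needed for the bound.
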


The \textit{absolute conic} in $\mathbb{CP}^3$ is given by the equations $x^2 + y^2 + z^2 = 0, w = 0$.

A \textit{complex circle} is an irreducible conic in $\mathbb{CP}^3$ having two distinct common points with the absolute conic. 
Clearly, a circle in $\mathbb{R}^3$ is a subset of a complex circle.




A line in $\mathbb{CP}^3$ can be naturally defined in the Pl\"ucker coordinates $\mathbb{CP}^5$: The line passing through points $x_1 : y_1 : z_1 : w_1$ and $x_2 : y_2 : z_2 : w_2$ is identified with the point
\[
x_1y_2 - x_2y_1 : x_1z_2 - x_2z_1 : x_1w_2 - x_2w_1 : y_1z_2 - y_2z_1 : y_1w_2 - y_2w_1 : z_1w_2 - z_2w_1\in \mathbb{CP}^5.
\]

Let $Z \subset k^3$ be an irreducible singly ruled surface.
A line $\ell \subset Z$ is called an \textit{exceptional} line of~$Z$  if every point of $\ell$ is incident to another line contained in~$Z$.
A point $p \in Z$ is called an \textit{exceptional} point of $Z$ if it is incident to infinitely many lines contained in $Z$.
An upper bound on the number of exceptional lines and points on a irreducible ruled surface in $\mathbb{CP}^3$ is proved in the following proposition (see \cite[Corollary 3.6]{guth-katz}).

\begin{proposition} \label{exceptional lines}
If $Z \subset \mathbb{CP}^3$ is an irreducible ruled surface 
different from a plane and a regulus, then $Z$ contains at most two exceptional lines and at most one exceptional point.
\end{proposition}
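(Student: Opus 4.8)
The plan is to recast both notions of ``exceptional'' as statements about the one‑parameter family of ruling lines of $Z$, and then to exploit the rigidity of reguli. First I would record the standard structural facts. Since $Z$ is irreducible and is neither a plane nor a regulus, a general point of $Z$ lies on exactly one line of $Z$ --- if a general point lay on two distinct lines of $Z$, then $Z$ would be a quadric by the classical characterisation of (doubly) ruled surfaces recalled above. Hence the assignment of a general point to the line through it is a rational map whose image is an irreducible curve $C$ in the space of lines of $\mathbb{CP}^3$; the lines parametrised by $C$ --- the \emph{ruling lines} --- sweep out $Z$, they form the unique one‑dimensional component of the variety of lines contained in $Z$, and every other line on $Z$ is one of finitely many \emph{isolated} lines. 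I will also use two elementary facts: a one‑parameter family of distinct lines sweeps out a surface, and the set of lines meeting a fixed line is Zariski closed.

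For exceptional points: if $p_0\in Z$ lies on infinitely many lines of $Z$, then, after discarding the finitely many isolated lines, it lies on infinitely many ruling lines; these sweep out a subvariety of the irreducible surface $Z$, hence all of $Z$, so $Z$ is a cone with vertex $p_0$. An irreducible cone that is not a plane has a unique vertex (the set of its vertices is a linear subspace over which $Z$ is a cone, and $Z$ is forced to be a plane as soon as that subspace is positive‑dimensional). Thus there is at most one exceptional point.

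For exceptional lines the crucial reduction is: \emph{if $\ell_0\subseteq Z$ is exceptional, then every ruling line of $Z$ meets $\ell_0$.} Indeed, the finitely many isolated lines cover only finitely many points of $\ell_0$, so for all but finitely many $p\in\ell_0$ the ``other line'' through $p$ is a ruling line $\neq\ell_0$; the ruling lines so obtained meet $\ell_0$ and pass through a dense subset of $\ell_0$, hence they form a dense subfamily of $C$, hence --- by closedness --- all of $C$ meets $\ell_0$. Now suppose $Z$ has three distinct exceptional lines $\ell_0,\ell_1,\ell_2$. If two of them, say $\ell_0$ and $\ell_1$, meet at a point $q$, then every ruling line either lies in the plane spanned by $\ell_0,\ell_1$ (finitely many such) or passes through $q$; the latter sweep out $Z$, so $Z$ is a cone with vertex $q$ --- but an irreducible non‑planar cone has no line avoiding its vertex (project from the vertex), and the non‑vertex points of a ruling line of such a cone lie on no other line of $Z$, so a cone has no exceptional lines, a contradiction. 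Hence $\ell_0,\ell_1,\ell_2$ are pairwise skew; then each ruling line $\ell$ meets all three of them in three \emph{distinct} points, all lying on the unique (necessarily smooth) quadric $Q$ through $\ell_0,\ell_1,\ell_2$, whence $\ell\subseteq Q$. Therefore $Z=\bigcup\ell\subseteq Q$, so $Z=Q$ is a regulus, contradicting the hypothesis. Thus $Z$ has at most two exceptional lines.

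I expect the main difficulty to lie not in the geometric heart of the argument --- which is short: two meeting exceptional lines force a cone, three pairwise skew exceptional lines force a regulus --- but in making the preliminary bookkeeping airtight: justifying that ``singly ruled'' really forces a general point onto a unique line and the ruling family to be the unique one‑dimensional family (this is where the classical ``doubly ruled $\Rightarrow$ quadric'' theorem is invoked), and consistently separating the role of the one‑dimensional family of ruling lines from that of the finitely many isolated lines. In particular one must keep in mind that an exceptional line may itself be an isolated line, and that the cone subcase has to be excluded on its own rather than being absorbed into the regulus argument.
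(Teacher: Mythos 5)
Your proof is correct, and it is essentially the standard argument: the paper itself gives no proof of Proposition \ref{exceptional lines}, importing it from Guth--Katz \cite{guth-katz} (their Corollary 3.6), whose proof runs along the same lines as yours --- an exceptional point forces $Z$ to be a cone with a necessarily unique vertex; two meeting exceptional lines force a cone, which has no exceptional lines; and three pairwise skew exceptional lines force every generator onto the unique quadric through them, making $Z$ a regulus. The structural facts you invoke without proof (a generic point of an irreducible singly ruled surface lies on exactly one line, the generators form a single irreducible one-parameter family, and a surface that is generically doubly ruled is a plane or a regulus) are precisely the ingredients the paper itself quotes elsewhere (Theorem \ref{parameterisation} and the classification of doubly ruled surfaces), so leaning on them is consistent with the level of rigor here; I see no genuine gap.
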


Lines that are not exceptional we call \textit{generators} or \textit{generating lines}.
To describe all generating lines contained in ruled surface, we use the following theorem (see \cite[Section II]{edge}). 

\begin{theorem} \label{parameterisation}
If $Z \subset \mathbb{CP}^3$ be an irreducible ruled surface 
different from a plane and a regulus, then all generating lines of $Z$ can be parameterized by an irreducible algebraic curve in the Pl\" ucker space. 
\end{theorem}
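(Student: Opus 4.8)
The plan is to study the Fano variety $\mathbb{F}(Z) \subseteq \mathbb{CP}^5$ of all lines contained in $Z$, regarded as a subset of the Pl\"ucker quadric, and to show that it has a unique one-dimensional irreducible component $\Gamma$, which will be the curve claimed in the statement. First I would check that $\mathbb{F}(Z)$ is a closed algebraic subset of $\mathbb{CP}^5$ of dimension exactly one. It has dimension at least one, since $Z$, being ruled and two-dimensional, cannot be a finite union of lines, so $\mathbb{F}(Z)$ is infinite. It has dimension at most one, because if $C \subseteq \mathbb{F}(Z)$ were a component with $\dim C \geq 2$, then the incidence variety $I_C = \{(p, \ell) : p \in \ell,\ [\ell] \in C\}$ --- irreducible of dimension $\dim C + 1 \geq 3$, since it fibers over $C$ in projective lines --- would map onto $Z$ with generic fibre of positive dimension, so through a generic point of $Z$ there would pass infinitely many lines of $Z$; but then the lines of $Z$ through a generic point sweep out a two-dimensional cone, hence a dense subset of $Z$, forcing the line joining two generic points of $Z$ to lie in $Z$, i.e.\ $Z$ would be a plane, contrary to hypothesis.

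The core step is to show that every one-dimensional component $C$ of $\mathbb{F}(Z)$ sweeps out all of $Z$ and that there is only one such $C$. For the first part, $I_C$ is an irreducible surface whose image under the first projection is an irreducible subset of $Z$ containing infinitely many distinct lines, hence two-dimensional, hence all of $Z$ by irreducibility of $Z$; in particular the first projection $I_C \to Z$ is surjective and generically finite. For uniqueness I would use the classical fact that a generic point of an irreducible ruled surface which is neither a plane nor a regulus lies on exactly one line of the surface. This yields a rational map $\sigma$ from $Z$ to the Grassmannian of lines in $\mathbb{CP}^3$ sending a generic point to the unique line of $Z$ through it; its fibres are precisely those lines, so are one-dimensional, and therefore the closure $\Gamma := \overline{\sigma(Z)}$ is an irreducible curve. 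Since for a generic $[\ell]$ lying in any one-dimensional component $C$ a generic point of $\ell$ has $\sigma$-image $[\ell]$, we get $C = \Gamma$.

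It remains to locate the generators inside $\Gamma$. Because $I_\Gamma \to Z$ is surjective, through every point of $Z$ there passes a line of the family $\Gamma$; hence any line $\ell \subseteq Z$ with $[\ell] \notin \Gamma$ has each of its points lying on a line of $\Gamma$ distinct from $\ell$, so $\ell$ is exceptional. Thus every generator of $Z$ corresponds to a point of $\Gamma$, while by Proposition~\ref{exceptional lines} at most two points of $\Gamma$ correspond to exceptional lines; discarding those finitely many points exhibits the generating lines of $Z$ as parameterized by the irreducible algebraic curve $\Gamma$ in Pl\"ucker space. I expect the main obstacle to be making the uniqueness step fully rigorous --- confirming that ``singly ruled'' produces a genuinely single, well-defined one-parameter ruling --- together with supplying the classical inputs that an irreducible surface through whose generic point pass at least two (respectively infinitely many) of its own lines must be a regulus or a plane (respectively a plane).
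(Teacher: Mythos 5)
Your proposal is correct in outline, but it necessarily takes a different route from the paper, because the paper does not prove Theorem \ref{parameterisation} at all: it simply cites the classical treatise of Edge (\cite[Section II]{edge}), where the generators of a singly ruled surface are parameterized by an irreducible curve in Pl\"ucker coordinates. Your argument via the Fano variety $\mathbb{F}(Z)$ and the incidence correspondence is sound: the dimension count giving $\dim \mathbb{F}(Z)=1$ (with the cone/secant argument ruling out a two-dimensional family of lines unless $Z$ is a plane), the observation that each one-dimensional component sweeps out all of $Z$, the identification of every such component with $\Gamma=\overline{\sigma(Z)}$, and the remark that any line of $Z$ outside $\Gamma$ is automatically exceptional, so that by Proposition \ref{exceptional lines} the generators are exactly $\Gamma$ minus at most finitely many points --- which is precisely the form in which the theorem is used later in the paper (generators forming an irreducible curve, so each is a limit of other generators). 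What your approach buys is a self-contained modern proof in place of a citation; what it costs is the classical input you yourself flag, namely that an irreducible surface through whose generic point pass at least two of its own lines must be a plane or a regulus. Be aware that this is slightly stronger than the fact the paper records as well known (doubly ruled in the sense of \emph{every} point lying on two lines implies plane or regulus); the generic-point version is standard (it is proved, for instance, in Guth and Katz's treatment of ruled surfaces \cite{guth-katz}), but it is the one genuinely external ingredient without which your uniqueness step, and hence the well-definedness of the rational map $\sigma$, does not go through.
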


We also use the following property of generating lines
(see \cite[Lemma 1.3]{polo2011ruled}).

\begin{lemma} \label{intersection with everything}
If $Z \subset \mathbb{CP}^3$ be an irreducible ruled surface, then each algebraic curve on $Z$, which is not a generating line, intersects all generating lines of $Z$.
\end{lemma}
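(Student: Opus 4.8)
The plan is to exploit the ruling of $Z$ as a rational fibration over the parameter curve $\Gamma$ supplied by Theorem~\ref{parameterisation}, and to show that any irreducible curve $C \subset Z$ other than a generator surjects onto $\Gamma$ under this fibration, with each point of $\Gamma$ pulling back to an incidence of $C$ with the corresponding generator. I may assume $Z$ is singly ruled: if $Z$ is a plane, then $C$ meets every line of $Z$ by B\'ezout in $\mathbb{CP}^3$, and if $Z$ is a regulus the claim follows from B\'ezout on the quadric (a line of one ruling meets every curve of positive degree). I also assume $C$ irreducible, the reducible case reducing to treating a component of $C$ that is not a generator.

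First I would set up the ruling map. By Theorem~\ref{parameterisation} the generators of $Z$ are parameterised by an irreducible projective curve $\Gamma$ in Pl\"ucker space $\mathbb{CP}^5$; write $\ell_\gamma$ for the line indexed by $\gamma \in \Gamma$. Sending a point to the generator through it defines a rational map $\pi\colon Z \dashrightarrow \Gamma$ whose generic fibre over $\gamma$ is $\ell_\gamma$. By Proposition~\ref{exceptional lines} there is at most one exceptional point, while through every other point of $Z$ a unique generator passes; hence $\pi$ is a morphism away from (at most) that single point, so its restriction $\pi|_C\colon C \dashrightarrow \Gamma$ is a genuine rational map.

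The core step is surjectivity. Passing to the normalisation $\nu\colon \tilde C \to C$, a smooth projective curve, the composite $\pi\circ\nu$ extends to an honest morphism $f\colon \tilde C \to \Gamma$, since a rational map from a smooth curve to a projective variety is everywhere defined. The map $f$ is non-constant: were it constant, $C$ would lie in a single fibre $\ell_\gamma$, forcing the irreducible curve $C$ to equal the line $\ell_\gamma$, contrary to $C$ not being a generator. A non-constant morphism from an irreducible projective curve to the irreducible curve $\Gamma$ has image that is closed, irreducible and dense, hence equal to all of $\Gamma$; thus $f$ is surjective, and for every $\gamma \in \Gamma$ there is $\tilde p \in \tilde C$ with $f(\tilde p)=\gamma$.

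It remains to upgrade ``$f(\tilde p)=\gamma$'' to genuine incidence $\nu(\tilde p)\in \ell_\gamma$, and this is the one point that needs care, because $f$ was obtained by taking limits across the exceptional point where $\pi$ is undefined. I would introduce the incidence variety $I = \{(q,\gamma) \in \mathbb{CP}^3\times\Gamma : q \in \ell_\gamma\}$, which is Zariski closed. The morphism $\tilde p \mapsto (\nu(\tilde p), f(\tilde p))$ lands in $I$ on the dense open subset where $\pi$ is defined, and since $I$ is closed and $\tilde C$ is irreducible, its image lies in $I$ everywhere. Hence $\nu(\tilde p)\in \ell_{f(\tilde p)}$ for all $\tilde p$, and choosing $\tilde p$ with $f(\tilde p)=\gamma$ produces a point of $C$ lying on $\ell_\gamma$. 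As $\gamma$ ranges over $\Gamma$ this shows $C$ meets every generator. The main obstacle is precisely this closedness argument: the extension of $f$ across the exceptional point is a priori only a formal limit in $\Gamma$, and one must confirm through the closed incidence correspondence that this limiting value still records an actual intersection of $C$ with the named generator.
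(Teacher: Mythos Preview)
The paper does not prove this lemma; it quotes it from \cite{polo2011ruled} (Lemma~1.3) without argument, so there is no in-paper proof to compare against. Your proposal supplies exactly the standard proof one finds in the ruled-surface literature: pass to the incidence correspondence over the parameter curve $\Gamma$, observe that an irreducible curve $C\subset Z$ not equal to a fibre must dominate $\Gamma$, and read off the intersection with every generator from surjectivity. That strategy is sound and would be accepted as a proof.

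One small inaccuracy worth fixing: your appeal to Proposition~\ref{exceptional lines} to conclude that $\pi$ is a \emph{morphism} off a single point overstates what that proposition gives. The exceptional point is where \emph{infinitely} many generators pass; there may still be a one-dimensional locus (the double curve of the birational map $I\to Z$, including the exceptional lines themselves) where two generators meet and $\pi$ is genuinely multi-valued. This does not damage your argument, since all you actually use is that $\pi$ is a \emph{rational} map, i.e.\ defined on a dense open of $Z$, and that $C$ is not contained in its indeterminacy locus. The clean way to phrase it is to work directly with the incidence variety $I\subset \mathbb{CP}^3\times\Gamma$ from the start: $I\to Z$ is birational because $Z$ is singly ruled, the strict transform $\tilde C\subset I$ of $C$ is an irreducible curve, and its projection to $\Gamma$ is nonconstant (else $\tilde C$ is a fibre and $C$ a generator), hence surjective. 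Your closing paragraph already contains this argument; promoting it to the main line removes the need for the imprecise claim about $\pi$.
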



Our main tool to show Theorem \ref{lines-circles} is the following proposition, which we prove in Section \ref{proof of propositions}.


\begin{proposition} \label{main proposition}
	Let $\mathcal{L}$ be a collection of $n$ lines and $\mathcal{C}$ be a collection of $m$ circles in $\mathbb{R}^3$. 
	Suppose that for some $A \ge 10^5 \min(n, m)^{1/2}$, each curve of $\mathcal{L} \cup \mathcal{C}$ contains at least $A$ points of $\PP(\mathcal{L}, \mathcal{C})$.
	Then there are at most $500 \min(n, m) / A$ planes and hyperboloids with one sheet containing all curves of $\mathcal{L} \cup \mathcal{C}$.

\end{proposition}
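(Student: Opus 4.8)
The plan is as follows. Assume without loss of generality $\min(n,m)=m$ (so $n\ge m$); note the hypothesis is already rigid, since a line and a circle meet in at most two points, so each line of $\mathcal L$ meets $\ge A/2$ circles and each circle meets $\ge A/2$ lines, forcing $n,m\ge A/2$.

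\textbf{Step 1: a low-degree surface through everything.} Imitating the polynomial-method argument used by Guth--Katz for Theorem~\ref{Guth-Katz}, I would produce a non-zero squarefree $f\in\mathbb R[x,y,z]$ of degree $d=O(m/A)$ whose zero set $Z=Z(f)$ contains every circle of $\mathcal C$: one interpolates through a carefully chosen set of points of $\PP(\mathcal L,\mathcal C)$ lying on the circles (a degree-$d$ surface has $\binom{d+3}{3}$ free coefficients, and containing a given circle costs $2d+1$ linear conditions), using polynomial partitioning / an iteration to keep the number of points used, and hence $d$, of order $m/A$ rather than the trivial $m^{1/2}$. Then every $\ell\in\mathcal L$ contains $\ge A>d+1$ points of $\PP(\mathcal L,\mathcal C)$, each lying on a circle of $\mathcal C\subset Z$, whence $\ell\subset Z$; so $Z$ contains all of $\mathcal L\cup\mathcal C$. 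Complexifying, discarding components carrying no circle, we write $Z=Z_1\cup\dots\cup Z_k$ as a union of distinct irreducible surfaces in $\mathbb{CP}^3$ with $\sum\deg Z_i=d$, and by the minimality built into the construction each $Z_i$ carries a circle of $\mathcal C$ lying in no other $Z_j$ (a ``$Z_i$-private'' circle).

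\textbf{Step 2: each component is crowded with private curves.} Fix $i$ and a $Z_i$-private circle $c$. As $c$ is an irreducible conic not contained in any $Z_j$ ($j\ne i$), B\'ezout gives $|c\cap Z_j|\le 2\deg Z_j$, so at most $2d$ of the $\ge A$ points of $\PP(\mathcal L,\mathcal C)$ on $c$ lie on $\bigcup_{j\ne i}Z_j$; since $2d=O(m/A)<A/2$, at least $A/2$ of them avoid it. For such a point $p$, any line of $\mathcal L$ through $p$ is irreducible and contained in $Z$, hence in some $Z_l$; as $p\in Z_l$ but $p\notin Z_j$ for $j\ne i$ we get $l=i$, and likewise the line lies in no $Z_j$ with $j\ne i$, i.e. it is $Z_i$-private. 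A line meets $c$ in $\le 2$ points, so $Z_i$ contains $\ge A/4$ distinct $Z_i$-private lines of $\mathcal L$; running the symmetric argument from such a private line produces $\ge A/4$ distinct $Z_i$-private circles of $\mathcal C$ in $Z_i$. Since private curves lie in exactly one component, $k(A/4)\le n$ and $k(A/4)\le m$, so $k\le 4\min(n,m)/A\le 500\min(n,m)/A$. It remains only to show each $Z_i$ is a plane or a hyperboloid with one sheet (cones and cylinders included).

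\textbf{Steps 3--4: each component is ruled of degree at most two.} Each $Z_i$ carries $\ge A/4$ lines. If it were not ruled, Corollary~\ref{bound on number of lines} would bound its lines by $\deg(Z_i)(11\deg(Z_i)-24)<11d^2=O(m^2/A^2)$, which is $<A/4$ by a short computation using $A\ge 10^5 m^{1/2}$ and $d=O(m/A)$ from Step~1 --- and it is exactly this inequality that forces the low degree of Step~1. So each $Z_i$ is ruled: a plane is fine; a regulus carrying a circle is a hyperboloid with one sheet, a cone, or an elliptic cylinder (a hyperbolic paraboloid carries no circle); otherwise $Z_i$ is singly ruled, and Proposition~\ref{exceptional lines}, Theorem~\ref{parameterisation} and Lemma~\ref{intersection with everything} apply. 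In that case the curve $Z_i\cap\{w=0\}$ has degree $\deg Z_i$, and each complex circle on $Z_i$ meets $\{w=0\}$ in two points of the absolute conic $\mathcal A$; if $\mathcal A\not\subset Z_i$, B\'ezout gives $\le 2\deg Z_i$ such points, so the $\ge A/4$ private circles fall into $<2(\deg Z_i)^2$ classes, circles in one class sharing both points at infinity and hence lying in parallel planes, and some class contains $\ge A/(8(\deg Z_i)^2)$ of them --- a number large by Step~1 (if $\mathcal A\subset Z_i$ one checks separately that $Z_i$, being sphere-like, cannot carry $A/4$ lines). Now combine ruledness with this parallel family: choosing coordinates so the planes are $z=\mathrm{const}$, write the generators as $t\mapsto(x_0(s)+t\alpha(s),\,y_0(s)+t\beta(s),\,t)$ and the circles as $(x-a(t))^2+(y-b(t))^2=\rho(t)^2$; since by Lemma~\ref{intersection with everything} every generator meets every such circle, comparing the two descriptions of $Z_i$ as in Nilov--Skopenkov~\cite{nilov-skopenkov} forces $a,b$ affine and $\rho^2$ quadratic in $t$, so $\deg Z_i\le 2$. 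An irreducible ruled quadric containing a circle is a hyperboloid with one sheet, a cone, or an elliptic cylinder, completing the proof.

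\textbf{The main obstacle.} Steps 2--4 are essentially mechanical given the cited results (a B\'ezout--pigeonhole count, the Cayley--Salmon theorem, and an adaptation of Nilov--Skopenkov). The crux is Step~1: for Step~3 one needs the surface through all the curves to have degree of order $\min(n,m)/A$, so that $11d^2\ll A$, whereas plain interpolation through the $m$ circles only yields degree $O(m^{1/2})$, which is too large; squeezing the degree down to $O(\min(n,m)/A)$ --- presumably via polynomial partitioning and/or an iterative peeling of ruled pieces, in the style of the proof of Theorem~\ref{Guth-Katz} --- is where the genuine work lies. A secondary nuisance is handling the degenerate configurations ($\mathcal A\subset Z_i$, ``horizontal'' generators, circles degenerating at infinity) and the passage between the real and the complex pictures; none of these should be serious.
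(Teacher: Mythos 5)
Your outline defers the real content of Step~1 (``produce a surface of degree $O(\min(n,m)/A)$ through all the curves''), and this is precisely where the paper does its work: it proves this as a separate statement (Proposition~\ref{weak proposition}) by a random-sampling argument --- choose each line with probability $p\approx D^2/(25n)$, use a matching lemma and Chernoff bounds to show that with positive probability the sampled lines span a degree-$D$ surface already containing three quarters of the smaller family, then iterate by induction on $n+m$ --- not by polynomial partitioning. Acknowledging that this is the crux does not close the gap; as written, the degree bound you need is simply assumed.

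More seriously, even granting Step~1, your Step~3 comparison fails quantitatively. You compare the number of lines on a component ($\ge A/4$) with $11d^2$ where $d=O(\min(n,m)/A)$ is the \emph{global} degree, and claim $11d^2<A/4$ follows from $A\ge 10^5\min(n,m)^{1/2}$. It does not: the hypothesis only gives $\min(n,m)\le A^2/10^{10}$, so with, say, $A\sim 10^{16}$ and $\min(n,m)\sim A^2/10^{10}$ one has $d$ up to $\sim 5\cdot 10^{8}$ and $11d^2\sim 10^{18}\gg A/4$. The same defect hits your pigeonhole for circles sharing points at infinity, where you divide $A/4$ by $2(\deg Z_i)^2$. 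The paper's remedy is structural: it applies the covering proposition a \emph{second} time, to each component's private subcollections $(\mathcal L_i,\mathcal C_i)$ (which inherit $A_i\ge A-2d\ge \tfrac9{10}A$ points of $\PP(\mathcal L_i,\mathcal C_i)$ per curve), obtaining $d_i\le 500\min(n_i,m_i)/A_i\le \min(n_i,m_i)^{1/2}/5$ via the inequality $\min(n_i,m_i)/\min(n,m)^{1/2}\le\min(n_i,m_i)^{1/2}$, and then compares $11d_i^2\le\frac{11}{25}\min(n_i,m_i)$ with $n_i$ and $m_i$ themselves rather than with $A$. Without this per-component degree bound your route to ruledness and to the five circles in parallel planes breaks down in the large-parameter regime. (Your concluding Nilov--Skopenkov-style step and the count of components via private curves are fine in spirit; the paper instead bounds the number of components by the total degree and rules out $3\le\deg Q_i\le 4$ via the explicit $16$-dimensional family $(x^2+y^2)p_1(z)+xp_2(z)+yp_3(z)+p_4(z)$ and a limiting argument along the line $z=w=0$.)
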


\section{Bounding $\PP(\mathcal{L}, \mathcal{C})$} \label{proof of theorem}

\begin{proof}[Proof of Theorem \ref{lines-circles}]
Suppose that there are no $A$ curves of $\mathcal{L} \cup \mathcal{C}$ lying on a plane or on a hyperboloid with one sheet. 
Let us show that $|\PP(\mathcal{L}, \mathcal{C})| \le 1000 (n + m)A$.

If a curve $\omega \in \mathcal{L} \cup \mathcal{C}$ contains at most $A$ points of $\PP(\mathcal{L}, \mathcal{C})$, then $|\PP(\mathcal{L}\setminus \{\omega\}, \mathcal{C}\setminus\{\omega\})| \ge |\PP(\mathcal{L}, \mathcal{C})|- A$.
Removing one by one such curves \( \omega_1, \ldots, \omega_k \), one achieves the following scenario: Both collections $\mathcal L' := \mathcal{L} \setminus \{ \omega_1, \ldots, \omega_k \} $ and $\mathcal C' := \mathcal{C} \setminus \{ \omega_1, \ldots, \omega_k \}$ contain only curves with at least $A$ points of $\PP(\mathcal{L}', \mathcal{C}')$.
It is enough to show that the theorem holds for these collections. Indeed, 
\[
\PP(\mathcal{L}, \mathcal{C}) \le \PP(\mathcal{L}', \mathcal{C}') + kA \le 1000A(|\mathcal{L}'| + |\mathcal{C}'|) + A (|\mathcal{L}| + |\mathcal{C}| - |\mathcal{L}'| - |\mathcal{C}'|) \le 1000A(|\mathcal{L}| + |\mathcal{C}|).
\]
Thus without loss of generality, we can assume that each curve of $\mathcal L\cup\mathcal C$ contains at least $A$ points of $\PP(\mathcal L, \mathcal C)$.
By Proposition \ref{main proposition}, there exist $k \leq 500 \min(n, m) / A \le A / 10$ planes and hyperboloids with one sheet such that their union contains all curves of $\mathcal{L} \cup \mathcal{C}$. Denote these surfaces by $Z_1, \ldots, Z_k$.

Let $\mathcal{L}_i$ and $\mathcal{C}_i$ be the subcollections of $\mathcal{L}$ and $\mathcal{C}$ lying on $Z_i$ but not lying on $Z_j$ for all $j > i$. Clearly $\bigcup \mathcal{L}_i = \mathcal{L}$ and $\bigcup \mathcal{C}_i = \mathcal{C}$.
Let $\omega$ be any curve of $\mathcal{L}_i\cup\mathcal{C}_i$.
By B\' ezout's theorem, $\omega$ contains at most $4$ points of intersection with $Z_j$ for $j > i$.
Thus for all $i < j$ we have $|\PP(\mathcal{L}_i, \mathcal{C}_j)| + |\PP(\mathcal{L}_j, \mathcal{C}_i)| \leq 4(|\mathcal{L}_i| + |\mathcal{C}_i|) \le 4A$.
Since \(|\mathcal{L}_i| + |\mathcal{C}_i| \le A \), we have  \(|\PP(\mathcal{L}_i, \mathcal{C}_i)| \le 2 |\mathcal{L}_i| \cdot |\mathcal{C}_i| \le A^2\). 
Hence
\[
|\PP(\mathcal{L}_i, \mathcal{C}_i)| + \sum_{j > i} (|\PP(\mathcal{L}_i, \mathcal{C}_j)| + |\PP(\mathcal{L}_j, \mathcal{C}_i)|) < A^2 + 4k(|\mathcal{L}_i| + |\mathcal{C}_i|) \le A^2 + 4kA \le 2A^2
\]
for all $i$. Finally, we obtain
\[
|\PP(\mathcal{L}, \mathcal{C})| \le \sum_i \left(|\PP(\mathcal{L}_i, \mathcal{C}_i)| + \sum_{j > i} \left( |\PP(\mathcal{L}_i, \mathcal{C}_j)| + |\PP(\mathcal{L}_j, \mathcal{C}_i)| \right) \right) < 2kA^2 \le 1000 \min(n, m) A,
\]
which finishes the proof.
\end{proof}

\section{Covering $\mathcal{L}$ and $\mathcal{C}$ by a small number of quadrics} \label{proof of propositions}

First, we prove a weak version of Proposition \ref{main proposition}. 

\begin{proposition}  \label{weak proposition}
	Let $\mathcal{L}$ be a collection of $n$ lines and $\mathcal{C}$ be a collection of $m$ circles in $\mathbb{R}^3$. 
	Suppose that for some $A \ge 100 \min(n, m)^{1/2}$, each curve of $\mathcal{L}$ and $\mathcal{C}$ contains at least $A$ points of $\PP(\mathcal{L}, \mathcal{C})$.
	Then there is an algebraic surface of degree at most $500 \min(n, m) / A$, which contains all curves of $\mathcal{L}$ and $\mathcal{C}$.
\end{proposition}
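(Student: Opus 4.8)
The plan is to produce a single nonzero polynomial $f$ of degree $D:=\lfloor 500\min(n,m)/A\rfloor$ whose zero set contains every line of $\mathcal L$ and every circle of $\mathcal C$; then $\{f=0\}$ is the surface we want. Assume without loss of generality $m=\min(n,m)\le n$. Two elementary observations do most of the work. First, each line carries $\ge A$ points of $\PP(\mathcal L,\mathcal C)$, and since a line meets a circle in at most two points these points lie on at least $A/2$ distinct circles; hence $A\le 2m$, so $250\le D\le 5\sqrt m$ and in particular $2D+1\le A$. Second, the following \emph{propagation} principle: if a polynomial of degree $\le D$ vanishes on a subfamily $\mathcal F\subseteq\mathcal L\cup\mathcal C$, then restricting it to a line $\ell\notin\mathcal F$ gives a univariate polynomial of degree $\le D$, so $\ell$ lies in the zero set as soon as it carries $\ge D+1$ points of $\PP$ belonging to curves of $\mathcal F$; similarly a circle is absorbed once it carries $\ge 2D+1$ such points. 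Combining this with the first observation: once $f$ vanishes on all circles it automatically vanishes on every line (each has $\ge A\ge D+1$ points of $\PP$, every one of them on some circle), and symmetrically; so it suffices to trap either all the circles or all the lines.

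For the construction I would first dispose of the ``balanced'' range directly: a nonzero polynomial of degree $\le D$ vanishing at a prescribed set of fewer than $\binom{D+3}{3}$ points always exists, so imposing vanishing at $2D+1$ points on each of the $m$ circles works whenever $(2D+1)m<\binom{D+3}{3}$, i.e.\ essentially $D\gtrsim\sqrt m$ — which is exactly the case $A$ close to its lower bound $100\sqrt m$; symmetrically, interpolating $D+1$ points on each of the $n$ lines works when $\sqrt n\lesssim m/A$. In the remaining regime there are many more lines than circles, and here I would interpolate not on all of $\mathcal C$ but on a subcollection $\mathcal C'$ chosen so that every line still carries $\ge D+1$ points of $\PP$ on circles of $\mathcal C'$. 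Because every line meets $\ge A/2$ circles, one can take $|\mathcal C'|=\Theta(m^2/A^2)$ (keep each circle independently with probability $\Theta(m/A^2)$ and apply a concentration bound, or run a weighted covering argument using that the incidence system ``circles meeting a given line'' has bounded complexity); then $(2D+1)|\mathcal C'|<\binom{D+3}{3}$ leaves room for $D$ all the way up to $500m/A$ — this is where the constant $500$ is spent — so the interpolating $f$ exists, vanishes on $\mathcal C'$, hence by propagation on every line, and hence on every remaining circle (which now has $\ge A\ge 2D+1$ of its points of $\PP$ on lines of the zero set). Finally, $\{f=0\}$ has degree $\le D\le 500\min(n,m)/A$ and contains all curves.

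The step I expect to be the genuine obstacle is this last regime, $n\gg m$: no surface of small degree passes through $n$ lines in general position, so the conclusion can only hold because the hypothesis forces the lines to be extremely special (for instance, $n$ lines and $m$ circles through one point are pinned to a single quadric cone). Concretely, the covering step needs $\mathcal C'$ to meet \emph{every} line's circle-neighbourhood with multiplicity $\ge D$ while keeping $|\mathcal C'|=O((\min(n,m)/A)^2)$; a naive union bound over the $n$ lines degrades when $n$ is enormous, and one must instead exploit that every point of $\PP(\mathcal L,\mathcal C)$ lies on \emph{two} curves of the degree-$\le 2$ family $\mathcal L\cup\mathcal C$ (a line and a circle), so this family is very rich in two-fold incidences — allowing one to peel off ruled/regulus pieces via Cayley--Salmon (Theorem~\ref{Cayley-Salmon}), or via the Guth--Zahl machinery (Theorem~\ref{Guth-Zahl}), and feed each piece back into the propagation, with the number of peeling rounds controlled by $\Theta(\min(n,m)/A)$. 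Making this covering-plus-propagation loop terminate with the clean bound $500\min(n,m)/A$, keeping every constant under control, is the technical heart of the argument; the passage from here to Proposition~\ref{main proposition} is then a matter of decomposing the surface into its $\le D$ irreducible components and showing each relevant component is ruled and carries many circles in parallel planes, hence has degree at most two.
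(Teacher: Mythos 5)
Your reductions (WLOG $m=\min(n,m)$, the bound $A\le 2m$, and the propagation principle via B\'ezout/root counting) are fine, and your one-shot plan --- trap a subfamily $\mathcal C'$ of circles by interpolation, then absorb all lines, then all remaining circles --- would indeed prove the proposition \emph{if} the covering step worked. But the covering step is exactly where the argument breaks, and you say so yourself: you need \emph{every} one of the $n$ lines to carry at least $D+1$ points of $\PP$ on circles of the random sample $\mathcal C'$, and the only tool you offer is a Chernoff bound per line plus a union bound. The per-line failure probability is $e^{-\Omega(D)}$ with $D=O(\min(n,m)/A)$, a quantity that does not depend on $n$ at all, so for $n$ larger than $e^{cD}$ no choice of $\mathcal C'$ of the required size is guaranteed to exist by this argument; since the proposition must hold for arbitrary $n$ relative to $m$ and $A$, this is a genuine gap, not a technicality. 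The fallback you sketch (peeling ruled pieces via Cayley--Salmon or Guth--Zahl and iterating) is a gesture, not a proof: you give no mechanism by which the peeling interacts with the sampling, no control of the number of rounds, and no accounting that keeps the total degree at $500\min(n,m)/A$. There is also a smaller unaddressed point: "the line meets $\ge A/2$ circles of $\mathcal C'$" does not by itself give $\ge D+1$ \emph{distinct} zeros of $f$ on the line, since many sampled circles may cross the line in the same one or two points; one must first match points of $\PP$ on the line to distinct circles.

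The paper avoids both problems by proving something weaker per round and inducting. Its key lemma produces a surface of degree at most $\lceil 200\min(n,m)/A\rceil$ containing only three quarters of the \emph{smaller} family: one samples curves of the smaller family with probability $p=D^2/(25\,\cdot\min)$, uses a vertex--edge matching lemma to guarantee that the relevant intersection points are distinct (so independent Chernoff applies), and then controls the number of "bad" curves by Markov's inequality in expectation --- needing only a constant fraction of good curves, never a union bound over all $n$ lines, so nothing degrades as $n\to\infty$. The leftover quarter (whose curves still have $\ge A-2D$ points of the reduced incidence set, again by B\'ezout) is handled by induction on $n+m$, and the degree budget $\lceil 200\min/A\rceil+500(\min/4)/(A-2D)\le 500\min/A$ closes the recursion. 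If you want to salvage your one-shot scheme, you would have to replace the union bound by exactly this kind of "capture a constant fraction, then recurse" structure; also note that your last sentence conflates this proposition with Proposition~\ref{main proposition} --- the decomposition into degree-two components is a separate, later argument and is not needed here.
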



\subsection{Constructing a surface} \label{proof of weak proposition}

First, we state and prove several simple observations that we need to show Proposition \ref{weak proposition}.

\begin{lemma} [Multiplicative Chernoff bound] 

Let $X_1, \ldots, X_n$ be independent Bernoulli random variables with parameter $p$. Then the following inequalities hold
\begin{enumerate}
	\item $\mathbb{P}\left(\frac{1}{n}\sum_{i=1}^n X_i \ge (1 + \delta)p\right) \le \left( \frac{e^\delta}{(1 + \delta)^{1 + \delta}} \right)^{np};$
	\item $\mathbb{P}\left(\frac{1}{n}\sum_{i=1}^n X_i \le (1 - \delta)p\right) \le \left( \frac{e^{-\delta}}{(1 - \delta)^{1 - \delta}} \right)^{np}.$	
\end{enumerate}

\end{lemma}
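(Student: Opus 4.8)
The plan is to apply the standard Chernoff exponential-moment method, handling the two tails separately but by mirror-image arguments. For the upper tail, I would fix an arbitrary parameter $t > 0$ and apply Markov's inequality to the exponentiated sum: writing $S = \sum_{i=1}^n X_i$, monotonicity of $x \mapsto e^{tx}$ gives $\mathbb{P}(S \ge (1+\delta)np) = \mathbb{P}(e^{tS} \ge e^{t(1+\delta)np}) \le e^{-t(1+\delta)np}\,\mathbb{E}[e^{tS}]$. Independence of the $X_i$ factors the moment generating function as $\mathbb{E}[e^{tS}] = \prod_{i=1}^n \mathbb{E}[e^{tX_i}] = (1 - p + pe^t)^n$, since each $X_i$ is Bernoulli with parameter $p$.

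The key analytic estimate is the elementary inequality $1 + x \le e^x$, which with $x = p(e^t - 1)$ yields $1 - p + pe^t = 1 + p(e^t - 1) \le e^{p(e^t - 1)}$, and hence $\mathbb{E}[e^{tS}] \le e^{np(e^t - 1)}$. Combining these gives the bound $\mathbb{P}(S \ge (1+\delta)np) \le \exp\!\big(np(e^t - 1 - (1+\delta)t)\big)$, valid for every $t > 0$. The remaining step is to optimize over $t$: differentiating the exponent $e^t - 1 - (1+\delta)t$ and setting the derivative to zero gives $e^t = 1 + \delta$, i.e. the optimal choice $t = \ln(1+\delta)$, which is positive for $\delta > 0$. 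Substituting collapses the exponent to $\delta - (1+\delta)\ln(1+\delta)$, which is exactly $\ln\!\big(e^\delta / (1+\delta)^{1+\delta}\big)$, establishing part (1).

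For part (2) I would run the symmetric argument with the negative exponential. Fixing $t > 0$ and applying Markov's inequality to $e^{-tS}$ gives $\mathbb{P}(S \le (1-\delta)np) \le e^{t(1-\delta)np}\,\mathbb{E}[e^{-tS}] \le \exp\!\big(np(e^{-t} - 1 + (1-\delta)t)\big)$, using the same factorization together with $1 - p + pe^{-t} \le e^{p(e^{-t}-1)}$. Optimizing the exponent gives $e^{-t} = 1 - \delta$, i.e. $t = -\ln(1-\delta)$, which is positive precisely because $0 < \delta < 1$; substituting reduces the exponent to $-\delta - (1-\delta)\ln(1-\delta) = \ln\!\big(e^{-\delta}/(1-\delta)^{1-\delta}\big)$, giving part (2).

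I do not anticipate a genuine obstacle, as this is the classical Chernoff derivation. The only points requiring care are confirming that the optimizing value of $t$ lies in the admissible range $t > 0$ in each case (automatic for $\delta > 0$ in part (1), and the reason the hypothesis $\delta < 1$ is needed in part (2)), and verifying the two routine algebraic simplifications that recast the optimized exponents in the stated product form.
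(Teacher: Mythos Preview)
Your argument is correct and is precisely the classical exponential-moment derivation of the multiplicative Chernoff bound. The paper itself does not supply a proof of this lemma: it is stated as a standard named result and immediately specialized to the corollary used later, so there is no paper proof to compare against; your write-up simply fills in the well-known details.
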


Substituting $\delta = 1$ in the first inequality and $\delta = \frac{1}{2}$ in the second one, we obtain the following corollary.

\begin{corollary} \label{Chernoff}
Let $X_1, \ldots X_n$ be independent Bernoulli random variables with parameter $p$. Then the following inequalities hold
\begin{enumerate}
	\item $\mathbb{P}\left(\frac{1}{n}\sum_{i=1}^n X_i \ge 2p\right) \le \exp \left( - \frac{1}{4} np \right);$
	\item $\mathbb{P}\left(\frac{1}{n}\sum_{i=1}^n X_i \le \frac{1}{2} p\right) \le \exp \left( -\frac{1}{8} np \right).$

\end{enumerate}

\end{corollary}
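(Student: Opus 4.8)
The plan is to obtain both inequalities by direct substitution into the Multiplicative Chernoff bound, followed by a comparison of the resulting exponential bases. No probabilistic work remains once the lemma is invoked; the entire task reduces to checking two elementary numerical inequalities involving $\ln 2$.

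For the first inequality, I would apply part (1) of the lemma with $\delta = 1$. This yields
\[
\mathbb{P}\left(\tfrac{1}{n}\textstyle\sum_{i=1}^n X_i \ge 2p\right) \le \left(\frac{e}{4}\right)^{np}.
\]
It then suffices to show $e/4 \le e^{-1/4}$, since raising both sides to the nonnegative power $np$ preserves the inequality and gives the claimed bound $\exp(-np/4)$. Taking logarithms, $e/4 \le e^{-1/4}$ is equivalent to $1 - \ln 4 \le -1/4$, i.e.\ $\ln 4 \ge 5/4$. Because $\ln 4 = 2\ln 2 \approx 1.386$, this holds comfortably.

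For the second inequality, I would apply part (2) with $\delta = \tfrac{1}{2}$, so that the base of the exponential becomes $e^{-1/2}/(1/2)^{1/2} = \sqrt{2}\,e^{-1/2}$, giving
\[
\mathbb{P}\left(\tfrac{1}{n}\textstyle\sum_{i=1}^n X_i \le \tfrac{1}{2}p\right) \le \left(\sqrt{2}\,e^{-1/2}\right)^{np}.
\]
Here it suffices to verify $\sqrt{2}\,e^{-1/2} \le e^{-1/8}$, which after taking logarithms reads $\tfrac{1}{2}\ln 2 - \tfrac{1}{2} \le -\tfrac{1}{8}$, i.e.\ $\ln 2 \le 3/4$. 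Since $\ln 2 \approx 0.693 < 0.75$, this holds, and raising to the power $np$ delivers the bound $\exp(-np/8)$.

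The main (and only) obstacle is verifying the two scalar inequalities $\ln 4 \ge 5/4$ and $\ln 2 \le 3/4$; both follow from the standard estimate $\ln 2 \approx 0.693$, so the corollary is immediate once the Multiplicative Chernoff bound is in hand.
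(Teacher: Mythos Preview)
Your proof is correct and follows exactly the approach indicated in the paper: substitute $\delta=1$ into the upper-tail bound and $\delta=\tfrac12$ into the lower-tail bound, then check the resulting numerical inequalities. The paper merely states the substitution without spelling out the comparisons $\ln 4\ge 5/4$ and $\ln 2\le 3/4$, so your write-up is a faithful (and slightly more detailed) version of the intended argument.
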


A trivial upper bound on the minimal degree of the surface containing all curves of \( \mathcal{L} \cup \mathcal{C} \) is shown in the following lemma.

\begin{lemma}  \label{trivial degree bound}
If $\mathcal{L}$ is a collection of $n$ lines in \( \mathbb{R}^3 \) and $\mathcal{C}$ be a collection of $m$ circles in $\mathbb{R}^3$, then there is a surface of degree at most $(12(n + m))^{1/2}$ containing all curves of \( \mathcal{L} \cup \mathcal{C} \).
\end{lemma}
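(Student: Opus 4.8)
The plan is to use a dimension-counting / parameter-counting argument: the space of homogeneous polynomials of degree $D$ in four variables (i.e. degree $D$ surfaces in $\mathbb{CP}^3$, and hence affine surfaces in $\mathbb{R}^3$) has dimension $\binom{D+3}{3}$, so the projective space of such surfaces has dimension $\binom{D+3}{3} - 1$. The constraint that such a surface contains a fixed line imposes finitely many linear conditions on the coefficients: a degree-$D$ polynomial restricted to a line is a degree-$D$ polynomial in one parameter, so vanishing identically on the line is $D+1$ linear conditions. Likewise, requiring the surface to contain a fixed circle (a conic, parametrisable rationally by a parameter of degree $2$) forces the restriction, a polynomial of degree $2D$ in the parameter, to vanish identically, which is $2D+1$ linear conditions. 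So $n$ lines and $m$ circles together impose at most $n(D+1) + m(2D+1) \le 2(n+m)(D+1)$ linear conditions.

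First I would choose $D$ to be the least integer with $\binom{D+3}{3} > 2(n+m)(D+1)$; then a nonzero polynomial satisfying all the conditions exists, and its zero set is a surface of degree at most $D$ containing every curve of $\mathcal{L} \cup \mathcal{C}$. Next I would convert this into the clean bound stated in the lemma. Using $\binom{D+3}{3} \ge D^3/6$ (valid for $D \ge 1$), it suffices to have $D^3/6 > 2(n+m)(D+1)$. Taking $D = \lceil (12(n+m))^{1/2}\rceil$ and using $D+1 \le D \cdot \tfrac{3}{2}$ for $D \ge 2$ (treating the tiny cases $n+m \le 1$ separately, where a plane or a quadric already works), one checks $D^3/6 \ge D \cdot 12(n+m)/6 = 2D(n+m) \ge 2(n+m)(D+1)\cdot\tfrac{2}{3}\cdot\tfrac{3}{2}$, i.e. the inequality holds with room to spare; so the minimal working degree is at most $(12(n+m))^{1/2}$ rounded up, which I would absorb into the stated bound (or just carry the ceiling, as the subsequent arguments are insensitive to it).

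The only genuinely non-routine point is justifying that incidence to a circle is finitely many linear conditions of the claimed size. A circle in $\mathbb{R}^3$ lies on a unique plane and is a conic there; parametrise it (or its complexification, the complex circle) rationally as $\tau \mapsto (x(\tau):y(\tau):z(\tau):w(\tau))$ with each coordinate a polynomial of degree $\le 2$ in $\tau$. Then for a degree-$D$ form $F$, the composition $F(x(\tau),\dots,w(\tau))$ is a polynomial of degree $\le 2D$ in $\tau$; it vanishes identically iff $F$ vanishes on the circle, and identical vanishing is $2D+1$ linear equations in the coefficients of $F$. This is the step I would write out carefully; everything else is elementary estimation of binomial coefficients.
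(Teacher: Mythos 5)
Your overall strategy is sound and is close in spirit to the paper's: both are parameter counts in the $\binom{D+3}{3}$-dimensional space of degree-$D$ polynomials. The difference is in how containment of a curve is converted into linear conditions. The paper chooses $2d+1$ points on each curve, finds a surface of degree $d$ through all $(n+m)(2d+1)$ points, and then invokes B\'ezout to conclude each line and circle lies on the surface; you instead impose vanishing along each curve directly, using that a line gives $D+1$ and a (complexified) circle, being a conic with a degree-two rational parametrisation, gives $2D+1$ linear conditions. Your route avoids B\'ezout and is perfectly legitimate; the condition count $n(D+1)+m(2D+1)\le (n+m)(2D+1)<2(n+m)(D+1)$ matches the paper's point count, so the two arguments are quantitatively the same.

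There are, however, two flaws in your final estimation as written. First, the displayed chain ``$D^3/6 \ge 2D(n+m) \ge 2(n+m)(D+1)\cdot\tfrac{2}{3}\cdot\tfrac{3}{2}$'' is false: since $\tfrac{2}{3}\cdot\tfrac{3}{2}=1$, it asserts $2D(n+m)\ge 2(n+m)(D+1)$, i.e.\ $D\ge D+1$. The crude bound $\binom{D+3}{3}\ge D^3/6$ together with $D^2\ge 12(n+m)$ only gives $D^3/6\ge 2(n+m)D$, which falls just short of the needed $2(n+m)(D+1)$. Second, taking $D=\lceil(12(n+m))^{1/2}\rceil$ only yields degree at most the ceiling, which can exceed the bound $(12(n+m))^{1/2}$ stated in the lemma. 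Both issues disappear if you argue as the paper does: take $D=\lfloor(12(n+m))^{1/2}\rfloor$, so that $(D+1)^2>12(n+m)$, and use the sharper bound $\binom{D+3}{3}\ge\frac{(D+1)^3}{6}$; then
\[
n(D+1)+m(2D+1)\le (n+m)(2D+1)<2(n+m)(D+1)\le\frac{(D+1)^3}{6}\le\binom{D+3}{3},
\]
so a nonzero polynomial of degree at most $D\le(12(n+m))^{1/2}$ exists, exactly as claimed. With that repair your argument is complete.
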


\begin{proof}
Set $d =  \left \lfloor (12(n + m))^{1/2} \right \rfloor$. 
Choose $2d + 1$ pairwise distinct points on each curve of $\mathcal{L} \cup \mathcal{C}$. 
In total there are $k = (n + m)(2d + 1)$ chosen points. 
Since
$$k < 2(n + m)(d + 1) =  \frac{1}{6} \cdot 12(n + m) (d + 1) \le \frac{(d + 1)^3}{6} < \binom{d + 3}{3},$$
there is a surface $Z$ of degree $d$ containing all chosen points.
By B\' ezout's theorem and the fact that each curve \( \omega \in \mathcal{L} \cup \mathcal{C} \) intersects $Z$ at $2d + 1$ points, we obtain that $\omega$ lies on $Z$.
\end{proof}

Finally, we need the following trivial combinatorial lemma. 

\begin{lemma} \label{vertex-edge matching}  
If $G$ is a graph without isolated vertices, then there is a matching $M$ between $V(G)$ and $E(G)$ of size at least $|V(G)| / 2$ such that for each pair $(v, e) \in M$ vertex $v$ is endpoint of edge~$e$.

\end{lemma}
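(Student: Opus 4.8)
The plan is to construct the matching explicitly from a spanning forest of $G$, using the standard trick of rooting each tree so that incidences become a genuine matching.

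First I would reduce to the connected case. Write $G$ as the disjoint union of its connected components $G_1,\dots,G_c$; since $G$ has no isolated vertices, each $G_i$ has $t_i := |V(G_i)| \ge 2$. If for every $i$ we can produce a matching $M_i$ between $V(G_i)$ and $E(G_i)$ with the required incidence property and $|M_i| = t_i - 1$, then $M := \bigcup_i M_i$ is a matching between $V(G)$ and $E(G)$ with the incidence property (the $V(G_i)$ are pairwise disjoint, and so are the $E(G_i)$), and $|M| = \sum_i (t_i - 1) = |V(G)| - c$. Because each $t_i \ge 2$ we have $c \le \tfrac12 \sum_i t_i = \tfrac12 |V(G)|$, hence $|M| = |V(G)| - c \ge |V(G)|/2$, as desired.

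Then, for a fixed connected $G_i$ on $t_i \ge 2$ vertices, I would pick a spanning tree $T_i$ of $G_i$, fix an arbitrary root $r \in V(G_i)$, and for each non-root vertex $v$ let $e(v) \in E(T_i) \subseteq E(G_i)$ be the unique edge joining $v$ to its parent in the rooted tree. Every $v$ is an endpoint of $e(v)$, so this assignment has the incidence property; and the map $v \mapsto e(v)$ is injective, because from the edge $e(v) = \{v,\mathrm{parent}(v)\}$ one recovers $v$ as the endpoint farther from the root. Thus $M_i := \{\,(v,e(v)) : v \in V(G_i)\setminus\{r\}\,\}$ is a matching of size $t_i - 1$ with the claimed property, completing the argument.

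I expect no real obstacle here: the one point needing a moment's care is the injectivity of $v \mapsto e(v)$, which is precisely why one should root each spanning tree rather than merely pass to a spanning subgraph of minimum degree one. In the latter every vertex is still incident to some chosen edge, but two vertices could be forced onto the same edge; rooting a tree canonically orients each edge toward the root, turning the incidence relation into an honest injection from $V(G_i)\setminus\{r\}$ into $E(G_i)$.
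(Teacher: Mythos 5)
Your proof is correct and is essentially the same argument as the paper's: the paper roots each component at an arbitrary vertex and matches every other vertex $u$ to an edge $uw$ with $d(u)=d(w)+1$ (i.e.\ a parent edge in a BFS tree), with injectivity and the count $t_i-1\ge t_i/2$ per component exactly as in your rooted-spanning-tree version.
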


\begin{proof}	

Consider any connected component of $G$ and choose arbitrary vertex $v$ in this component. 
Let $d(u)$ be the distance from $v$ to $u$ in $G$, that is, the minimal number of edges in the path between $u$ and $v$. 
Let us match vertex $u \neq v$ to an edge $uw \in E(G)$ with property $d(u) = d(w) + 1$.
Obviously, all matched edges are different.
In each component of size $n$, there are at least $n - 1 \ge n / 2$ matched vertices. 
\end{proof}

\begin{proof}[Proof of Proposition \ref{weak proposition}]

The proof of Proposition \ref{weak proposition} is by induction on $n + m$.  

The base of induction \( n + m = 0 \) is trivial. 

The step of the induction can be done using the following lemma.

\begin{lemma} \label{small degree surface}
Let the conditions of Proposition \ref{weak proposition} hold.
\begin{enumerate}[topsep=-3pt,itemsep=-1ex,partopsep=1ex,parsep=1ex]
\item If \(n \le m\), then there is a surface of degree at most \(  D =  \lceil 200n / A \rceil \) containing at least~\( \lceil \frac{3n}{4} \rceil \) lines;
\item If \(n \ge m\), then there is a surface of degree at most \(  D =  \lceil 200m / A \rceil \) containing at least~\( \lceil \frac{3m}{4} \rceil \) circles.
\end{enumerate}

\end{lemma}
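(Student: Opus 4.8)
The plan is to use a random sampling argument combined with the trivial degree bound of Lemma \ref{trivial degree bound}, mimicking the approach of Guth and Katz. By symmetry I treat only case (1), so assume $n \le m$. The idea is to pick a random subcollection of the $m$ circles and apply Lemma \ref{trivial degree bound} to this sample together with all $n$ lines; the resulting surface has degree roughly $(n + (\text{sample size}))^{1/2}$. For this surface to contain most lines, I need each line (which meets $\ge A$ points of $\PP(\mathcal L,\mathcal C)$, hence $\ge A$ circles, up to bounded multiplicity) to hit the surface in more than $2\deg$ points; a line not on the surface meets it in at most $\deg$ points by B\'ezout. So if the sample retains a circle through more than $\deg$ of a line's incidence points, that line is forced onto the surface.

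First I would fix the sampling probability. Each line $\ell$ carries $\ge A$ points of $\PP(\mathcal L, \mathcal C)$, and through each such point passes at least one circle of $\mathcal C$; since two distinct circles meet in at most a bounded number of points, there are $\gtrsim A$ circles meeting $\ell$ in distinct points. Choose each circle of $\mathcal C$ independently with probability $p := c D / A$ for a suitable absolute constant $c$, where $D = \lceil 200 n/A\rceil$ is the target degree bound. Then the expected number of sampled circles is $pm \le cDm/A$, and more importantly the expected number of sampled incidence points on a fixed line $\ell$ is $\ge c D$; by the lower-tail Chernoff bound in Corollary \ref{Chernoff}, with probability $\ge 1 - \exp(-\Omega(cD))$ this count exceeds $D + 1$, provided $c$ is large enough. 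Since $D \ge 200 n/A \ge 200 n / (10^{-?}\dots)$ — here I use $A \ge 100 n^{1/2}$, so $D \ge 2 n^{1/2} \cdot \text{(const)}$ — the failure probability per line is far smaller than $1/(4n)$, so by a union bound over the $\le n$ lines, with positive probability \emph{every} line has $> D+1$ sampled incidence points. Simultaneously, by the upper-tail Chernoff bound, the number of sampled circles is $\le 2pm$ with high probability; but actually I must be careful, because the degree of the surface from Lemma \ref{trivial degree bound} depends on $n + |\text{sample}|$, and I need this to be $\le D$.

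The delicate point — and the step I expect to be the main obstacle — is the bookkeeping that makes the degree come out to exactly $\le D = \lceil 200 n / A\rceil$. Lemma \ref{trivial degree bound} applied to the $n$ lines and the $S$ sampled circles gives a surface of degree $\le (12(n+S))^{1/2}$. For this to be $\le D \approx 200 n/A$, I need $n + S \lesssim (n/A)^2$, i.e. roughly $S \lesssim n^2/A^2$; since $A \ge 100 n^{1/2}$ gives $n^2/A^2 \le n/10^4$, and $n + S \le 2n$ at worst, the bound $(12 \cdot 2n)^{1/2} = O(n^{1/2})$ is indeed $\le 200 n / A$ when $A \le n^{1/2}\cdot\text{const}$ — wait, that's the wrong direction. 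The resolution is that we do \emph{not} take all sampled circles into the surface: we want $S = pm$ small, and $p \approx D/A$, so $S \approx Dm/A$. Since $n \le m$ this could be large. Instead the correct move, following Guth--Katz, is to \emph{iterate}: repeatedly extract surfaces of controlled degree absorbing a constant fraction of the remaining lines; but here the statement only asks for \emph{one} surface catching $\lceil 3n/4\rceil$ lines, so a single application with the right $p$ suffices once one checks that $p$ can be taken $\Theta(D/A)$ with $pm$ giving, via Lemma \ref{trivial degree bound}, degree $\le D$: this requires $12(n + pm) \le D^2 = \Theta(n^2/A^2)$, hence $pm \le \Theta(n^2/A^2)$, hence $p \le \Theta(n^2/(A^2 m)) \le \Theta(n/(A^2)) \le \Theta(D/A)$ since $D = \Theta(n/A)$ and $n \le m$. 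So the constraints are mutually consistent precisely because $A \ge 100\min(n,m)^{1/2} = 100 n^{1/2}$, which forces $D^2 \gtrsim n$ and gives the room needed; assembling these inequalities with explicit constants, and verifying that the Chernoff failure probabilities beat $1/(4n)$, is the technical heart of the argument.

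Finally, once the surface $Z$ of degree $\le D$ is fixed, I would argue: every line $\ell$ has $> D + 1 \ge \deg Z + 1$ of its points lying on $Z$ (each sampled incidence point is on $Z$ by construction), so by B\'ezout $\ell \subset Z$; hence $Z$ contains all $n \ge \lceil 3n/4\rceil$ lines — in fact the union bound was over all lines so we even get all $n$ of them, which is stronger than needed, but I would state it as $\ge \lceil 3n/4 \rceil$ to match the use in the induction. (If one prefers to union-bound only over a $3/4$-fraction to get slightly better constants, that also works.) The conclusion then feeds into the inductive step of Proposition \ref{weak proposition}: peel off these lines and their containing surface, and recurse on the rest.
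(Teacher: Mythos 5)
There is a genuine gap, and it is quantitative, not cosmetic: your degree bookkeeping cannot be made to work in the configuration you chose. If you apply Lemma \ref{trivial degree bound} to \emph{all} $n$ lines plus a sample of circles, the degree is at least $(12n)^{1/2}\approx 3.5\,n^{1/2}$, while the target is $D=\lceil 200n/A\rceil \le \lceil 2n^{1/2}\rceil$ (since $A\ge 100n^{1/2}$); so the bound fails even at the extreme $A=100n^{1/2}$, and fails badly when $A\gg n^{1/2}$. Your claim that ``$A\ge 100n^{1/2}$ forces $D^2\gtrsim n$'' is backwards: the hypothesis gives $D^2\lesssim 4n$ with no matching lower bound of order $n$. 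Your fallback (build the surface through the sampled circles only, then force lines onto it by B\'ezout) runs into the opposite problem: the degree constraint forces the sampling probability $p\lesssim D^2/m$, so the expected number of sampled circles meeting a fixed line at distinct points is about $Ap\lesssim AD^2/m=\Theta(n^2/(Am))$, which must exceed roughly $D=\Theta(n/A)$; this needs $n\gtrsim m$, i.e.\ exactly the regime excluded in case (1), where $m$ may be much larger than $n$. So sampling the \emph{larger} family cannot give both a degree-$D$ surface and enough forced incidences per line.

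The paper's proof resolves this with a two-stage ``bootstrap'' that you are missing. One samples the \emph{smaller} family: pick each line independently with probability $p=D^2/(25n)$, so the sample has size about $D^2/25$ and Lemma \ref{trivial degree bound} gives a surface $Z_0$ of degree at most $D$ containing the sampled lines. Then (stage one) each circle $\gamma$, having at least $A$ points of $\PP(\mathcal L,\{\gamma\})$, meets more than $2D$ sampled lines with probability at least $15/16$ (Chernoff, after using Lemma \ref{vertex-edge matching} to match at least $A/2$ of these points to \emph{distinct} lines so that the relevant indicator events are independent — a point you also gloss over), hence lies on $Z_0$ by B\'ezout; (stage two) each line not in a small exceptional set $\mathcal L'$ (of size at most $n/4$, controlled by two applications of Markov's inequality) meets more than $2D$ such absorbed circles at distinct points and therefore also lies on $Z_0$. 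It is this indirect absorption of lines through circles — rather than forcing lines directly by sampled incidence points — that makes the constants $D=\lceil 200n/A\rceil$ and the fraction $3n/4$ attainable for all $m\ge n$.
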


Lemma  \ref{small degree surface} is proved at the end of Subsection \ref{proof of weak proposition}.

By Lemma \ref{small degree surface}, there is a surface \( Z_0 \) of degree at most \(  D =  \lceil 200\min(n, m) / A \rceil \) containing at least three quarters of curves of a smaller collection among $\mathcal{L}$ and $\mathcal{C}$. 
Let $n_1 = |\mathcal{L}_1|$ and $m_1 = |\mathcal{C}_1|$, where $\mathcal{L}_1 \subset \mathcal{L}$ and $\mathcal{C}_1  \subset \mathcal{C}$ are the collections of lines and circles respectively that are not contained in $Z_0$.
If $n < m$, then $n_1 \le n/4$ and $4\min(n_1, m_1) \le 4n_1 \le n = \min(n, m)$. Similarly, the inequality $4\min(n_1, m_1) \le \min(n, m)$ holds in the case $m \ge n$.


Every line of \( \mathcal{L}_1 \) (every circle of \( \mathcal{C}_1 \)) intersects at least $A - 2\deg Z_0 \ge A - 2D > 0$ circles  of \( \mathcal{C}_1 \) (lines of \( \mathcal{L}_1 \)).
Thus there are two possible cases: $n_1 = m_1 = 0$ or $n_1 > 0$ and $m_1 > 0$. 
In the first case, we find the surface of degree at most $\lceil 200 \min(n, m) /A \rceil$ containing all curves of $\mathcal{L} \cup \mathcal{C}$.

Suppose $n_1 > 0$ and $m_1 > 0$. Each curve of $\mathcal{L}_1 \cup \mathcal{C}_1$ has at least $A_1 = A - 2D$ points of $\PP(\mathcal{L}_1, \mathcal{C}_1)$. Hence we get
$$|A_1| \ge \frac{1}{2}|A| \ge 50 \min(n, m)^{1/2} \ge 50 (4 \min(n_1, m_1))^{1/2} = 100 \min(n_1, m_1)^{1/2}$$

By the induction hypothesis, there is a surface of degree $500 \min(n_1, m_1) /A_1$ containing all curves of $\mathcal{L}_1$ and $\mathcal{C}_1$. 
The union of this surface and $Z_0$ is a surface of degree at most $\left \lceil 200 \min(n, m) / A \right \rceil + 500 \min(n_1, m_1) / A_1 \le 500n/A$ containing all curves of $\mathcal{L}$ and $\mathcal{C}$.
\end{proof}

\begin{proof}[Proof of Lemma \ref{small degree surface}]
Suppose $n \le m$. The opposite case $n \ge m$ is obtained from proof below by changing lines to circles and vice versa.

Let $p = \frac{D^2}{25n}$. 
Clearly, $D \ge 10$ and $p = \frac{D^2}{25n} \le \frac{(200n + A)^2}{25n \cdot A^2} \le \frac{(200n + 2n)^2}{25n \cdot A^2} < 1$. 

Let $\mathcal{L}_0 \subset \mathcal{L}$ be a random subset chosen by picking every line independently with probability~$p$.
Note that $\mathbb{E}|\mathcal{L}| = pn = \frac{D^2}{25}$. 
By Corollary \ref{Chernoff}, we obtain
$$\mathbb{P}\left(|\mathcal{L}_0| >  \frac{2D^2}{25}\right) = \mathbb{P}\left(|\mathcal{L}_0| > 2pn\right) \le \exp\left( -\frac{1}{4}pn  \right) = \exp\left( -\frac{D^2}{100}  \right) < \frac{1}{2}\text{.}$$

For each $q \in \PP(\mathcal{L}, \mathcal{C})$, let $X_q$ be the event $\{\exists \ell \in \mathcal{L}_0 :  q \in \ell \}$. 
Consider an arbitrary circle $\gamma$ in $\mathcal{C}$. 
Set $Q_\gamma = \PP(\mathcal{L}, \{\gamma\})$. 
By the premises of the proposition, $|Q_\gamma| \ge A$.
Let us match some points of $Q_\gamma$ to distinct lines passing through these points in the following way.
If a point $q \in Q_\gamma$ belongs to a line $\ell \in \mathcal{L}$ with $|\ell \cap \gamma| = 1$, then $q$ is matched to $\ell$.
Let $Q_\gamma^1$ be the set of points that are matched after previous operation and $Q_\gamma^2 = Q_\gamma \setminus  Q_\gamma^1$.
Let $G$ be the graph such that $V(G) = Q_\gamma^2$ and $E(G)$ is a set of pairs of points of $Q_\gamma^2$ belonging to one line of $\mathcal{L}$.
Since there are no isolated vertices in $G$, by Lemma \ref{vertex-edge matching}, distinct lines can be matched to at least half of the points of $Q_\gamma^2$.
Let $Q_\gamma'$ be the set of matched points of~$Q_\gamma$.
Clearly, $|Q_\gamma'| \ge |Q_\gamma^1| + |Q_\gamma^2| / 2 \ge   A / 2$.
For each $q \in Q_\gamma'$, let $X^\gamma_q$ be the event that the line corresponding to $q$ is contained in $\mathcal{L}_0$.
Clearly, for each $q \in Q_\gamma'$ we have $X^\gamma_q \subset X_q$, and thus $\mathbb{P}(X_q) \ge \mathbb{P}(X^\gamma_q) = p$. 
If $S_\gamma = |\PP(\mathcal{L}_0, \{\gamma\})|$ and $S_\gamma' = \sum_{q \in Q_\gamma'} \mathbf{I}_{X^\gamma_q}$, then 
\[
S_\gamma = \sum_{q \in Q_\gamma} \mathbf{I}_{X_q} \ge \sum_{q \in Q_\gamma'} \mathbf{I}_{X^\gamma_q} = S_\gamma'
\]
and
\[
\mathbb{E}S_\gamma' =
\sum_{q \in Q_\gamma'} \mathbb{P}(X_q) \ge 
\sum_{q \in Q_\gamma'} \mathbb{P}(X^\gamma_q) \ge 
|Q_\gamma'| \cdot p \ge 
Ap / 2 \text{.}
\]

Since events $X^\gamma_q$ are independent, by Corollary \ref{Chernoff}, we obtain
\[
\mathbb{P}\left(S_\gamma \le 2D\right) \le 
\mathbb{P}\left(S_\gamma' \le 2D\right) \le 
\mathbb{P}\left(S_\gamma' \le \frac{AD}{100n} D \right) = 
\mathbb{P}\left(S_\gamma' \le \frac{1}{4}Ap\right) \le 
\exp\left(-\frac{1}{8} Ap \right) =
\]
\[
= \exp\left(-\frac{AD^2}{200n} \right) \le 
\exp \left( -\frac{200n}{A} \right) \le e^{-100} \le 
\frac{1}{16}\text{.}
\]

Consider any line $\ell \in \mathcal{L}$.
Let $\mathcal{C}_\ell$ be a maximum size family of circles of $\mathcal{C}$ such that it is possible to match all circles of $\mathcal{C}_{\ell}$ to different points of $\PP(\{\ell\}, \mathcal{C})$. 
Similarly, using Lemma \ref{vertex-edge matching}, we obtain $|\mathcal{C}_l| \ge A/2$.
Let $T_{\ell}$ be the number of circles of $\mathcal{C}_{\ell}$ such that each of them contains at most $2D$ intersection points with lines of $\mathcal{L}_0$. We have

$$\mathbb{E} T_{\ell} = \sum_{\gamma \in \mathcal{C}_{\ell}} \mathbb{P}\left(S_\gamma \le 2D\right) \le \frac{|\mathcal{C}_{\ell}|}{16}\text{.}$$

By Markov's inequality and the inequality $|\mathcal{C}_\ell| \ge A / 2 \ge 1000n / A \ge 4D$, we get 
$$\mathbb{P}(T_{\ell} \ge |\mathcal{C}_{\ell}| - 2D) \le \frac {\mathbb{E}T_{\ell}}{|\mathcal{C}_{\ell}| - 2D} \le \frac {|\mathcal{C}_{\ell}|}{16(|\mathcal{C}_{\ell}| - 2D)} \le \frac{1}{8}\text{.}$$ 

Let $\mathcal{L}'$ be the collection of lines $\ell \in \mathcal{L}$ such that $T_{\ell} \ge |\mathcal{C}_{\ell}| - 2D$. By Markov's inequality, we have
$$\mathbb{P} \left( |\mathcal{L}'| > \frac{n}{4} \right) \le \frac{4\mathbb{E}|\mathcal{L}'|}{n} = \frac{4}{n} \sum \limits_{\ell \in \mathcal{L}} \mathbb{P}(T_{\ell} \ge |\mathcal{C}_{\ell}| - 2D) \le \frac{1}{2}\text{.}$$

Therefore, there is $\mathcal{L}_0 \subset \mathcal{L}$ such that $|\mathcal{L}_0| \le \frac{2D^2}{25}$ and $|\mathcal{L}'| \le n / 4$. 
Thus, by Lemma \ref{trivial degree bound}, there is a surface of degree at most $D$ containing all lines of $\mathcal{L}_0$. 
Let $Z_0$ be a surface of minimal degree containing every line of $\mathcal{L}_0$.
By B\' ezout's theorem, every line or circle intersecting $Z_0$ in at least $2D + 1$ points lies on $Z_0$.
Since each line $\ell$ of $\mathcal{L} \setminus \mathcal{L}'$ has at least $2D + 1$ intersection points with circles that has at least $2D + 1$ intersections with $Z_0$, the line $\ell$ lies on $Z_0$.
\end{proof}

\subsection{Decomposition into quadrics}

In this section we show Proposition \ref{main proposition}. From now on we assume that all polynomials have coefficients in \(\mathbb{R}\) and all surfaces are considered in \(\mathbb{CP}^3\) except otherwise is explicitly indicated.
For a polynomial $Q \in \mathbb{R}[x, y, z]$, let $Z(Q)$ be the surface in $\mathbb{CP}^3$ defined by the equation $Q = 0$. 

\begin{proof}[Proof of Proposition \ref{main proposition}] 
Let $Q$ be the polynomial of minimum degree such that $Z(Q)$ contains all curves of $\mathcal{L} \cup \mathcal{C}$. 
Among all polynomials of minimum degree, we choose one with maximum number of irreducible over \(\mathbb{R}\) components.
By Proposition \ref{weak proposition}, we have $d \le 500 \min(n, m) / A \le A / 100$. 

Let $Q_1, \ldots, Q_k$ be the irreducible over \(\mathbb{R}\) components of $Q$. 
Suppose that \(Q_i\) is reducible over~\(\mathbb{C}\), that is, \(Q_i = Q_{i, 1} \cdot Q_{i, 2}\), where \(Q_{i, j} \in \mathbb{C}[x, y, z] \). 
If we replace all coefficients in \(Q_{i, j}\) be their real part, then we obtain polynomial \(Q_{i, j}' \in \mathbb{R}[x, y, z]\) such that \(\deg Q_{i, j}' \leqslant \deg Q_{i, j}\) and \(Q_{i, j}'\) contains all curves of \(\mathcal{L} \cup \mathcal{C}\) lying in \(Q_{i, j}\).
Thus replacement \(Q_i\) with \(Q_{i, 1}' \cdot Q_{i, 2}'\) decreases degree of the polynomial or increases the number of irreducible over \(\mathbb{R}\) components, a contradiction.

Let $\mathcal{L}_i \subseteq \mathcal{L}$ and $\mathcal{C}_i \subseteq \mathcal{C}$ be subcollections of curves contained in $Z(Q_i)$, but not contained in $Z(Q_j)$ for all $j \neq i$.
Since $Q$ is the polynomial of minimum degree, $\mathcal{L}_i \cup \mathcal{C}_i \neq \varnothing$.
Let $A_i$ be the maximum number~$k$ such that every curve of $\mathcal{L}_i \cup \mathcal{C}_i$ contains at least $k$ points of $\PP(\mathcal{L}_i, \mathcal{C}_i)$.
By B\' ezout's theorem, each curve of $\mathcal{L}_i \cup \mathcal{C}_i$ has at most $2d$ intersection points with $\bigcup_{j \neq i} Z(Q_j)$.
Hence each curve of $\mathcal{L}_i \cup \mathcal{C}_i$ contains at most $2d$ points of $\PP(\mathcal{L}, \mathcal{C}) \setminus \PP(\mathcal{L}_i, \mathcal{C}_i)$ and $A_i \ge A - 2d \ge \frac{9}{10}A$.

Denoting $n_i = |\mathcal{L}_i|$, $m_i = |\mathcal{C}_i|$ and $d_i = \deg Q_i$, by Propositions \ref{weak proposition}, we have
$$d_i \le \frac{500 \min(n_i, m_i)}{A_i} \le \frac{1000 \min(n_i, m_i)}{A} \le \frac{1000 \min(n_i, m_i)}{5000 (\min(n, m))^{1/2}} \le \frac{(\min(n_i, m_i))^{1/2}}{5}.$$

Using inequality $11d_i^2 \le \frac{11}{25} \min(n_i, m_i) < n_i$ and Corollary \ref{bound on number of lines}, we get that $Z(Q_i)$ is ruled in $\mathbb{CP}^3$.

Suppose that $Z(Q_i)$ contains the absolute conic. 
By Lemma \ref{intersection with everything}, each generating line passes through the absolute conic. 
Since the absolute conic does not contain real points and every line of $\mathcal{L}_i$ is real, these lines do not intersect the absolute conic and thus are exceptional. 
By Proposition~\ref{exceptional lines}, the surface $Z(Q_i)$ contains at most two exceptional lines, but $n_i \ge (5d_i)^{1/2} > 2$. 
Hence $Z(Q_i)$ does not contain the absolute conic.

Let $S_i$ be the set of intersection points of $Z(Q_i)$ and the absolute conic. 
By B\' ezout's theorem, the inequality $|S_i| \le 2d_i$ holds. 

Each circle of $\mathcal{C}_i$ is a subset of some complex circle in \(\mathbb{CP}^3\). 
Each of these complex circle intersects the absolute conic in exactly two points, that is, contains two points of $S_i$. 
Thus there are at least
$$\frac{|\mathcal{C}_i|}{\binom{|S_i|}{2}} \ge \frac{2|\mathcal{C}_i|}{|S_i|^2} \ge \frac{m_i}{2d_i^2} \ge 5$$
complex circles passing through the same pair of points of $S_j$. 
Let $\gamma_1, \ldots, \gamma_5 \in \mathcal{C}_i$ be the circles in $\mathbb{R}^3$ corresponding to these complex circles.
Since for any $i, j \in \{1, \ldots, 5\}$ complex circles corresponding to $\gamma_i$ and $\gamma_j$ have the same intersection points with the infinite plane, $\gamma_i$ and $\gamma_j$ lie in the same plane or in parallel planes. 

Lemma~\ref{intersection with everything} applied to the irreducible ruled surface $Z(Q_i)$ implies that each generating line passes through each \(\gamma_i\).
Suppose that there are two circles lying in one plane \( P \). 
For each point \( p \) of these circles, but, probably, the points of intersection, there is a line passing \( p \) and contained in \( P \) and in \( Z(Q_i) \). 
Since infinitely many lines lies in the intersection of \( P \) and \( Z(Q_i) \), plane \( P \) is contained in \( Z(Q_i) \) and hence \( Q_i \) is linear.

From now on, we assume that \(\gamma_i\) lies in distinct real planes. Let $\mathcal{X}$ be the family of polynomials of the form $h(x, y, z) = (x^2 + y^2) \cdot p_1(z) + x \cdot p_2(z) + y \cdot p_3(z) + p_3(z) \in \mathbb{R}[x, y, z]$, where $\deg h \le 4$.
This family is a vector space of dimension 16 and thus there is a polynomial of this family passing through any 15 points in the space. 


Assume that the planes containing circles $\gamma_i$ are parallel to $Oxy$.
Choose arbitrary three points on each circle $\gamma_i$. 
There is a polynomial $Q_i' \in \mathcal{X}$ such that $Z(Q_i')$ passes through chosen points. 
Since the intersection of $Z(Q_i')$ with a plane parallel to $Oxy$ is a circle, a line or the empty set, this surface contains each of $\gamma_i$. 
Moreover, each generating line of $Z(Q_i)$ intersects $Z(Q_i')$ in at least five points and thus is contained in it. 
Therefore, $Q_i$ divides $Q_i'$. 

If $Q_i'$ is irreducible over \(\mathbb{R}\), then $Q_i \in \mathcal{X}$. 
Suppose to the contrary that $Q_i' = f \cdot g$, where \(f, g \in \mathbb{R}[x, y, z]\) and $\deg f > 0, \deg g > 0$. 
Suppose that both $f$ and $g$ depends on variables $x$ or~$y$.
In this case, these polynomials can be expressed in form $f(x, y, z) = x \cdot u_1(z) + y \cdot u_2(z) + u_3(z)$ and $g(x, y, z) = x \cdot v_1(z) + y \cdot v_2(z) + v_3(z)$.
From these representations follows that $u_1 v_1 = u_2 v_2$ and $u_1 v_2 + u_2 v_1 = 0$. 
It means that $u_1 = u_2 = 0$ or $v_1 = v_2 = 0$, a contradiction.
If $Q_i'$ cannot be expressed as a product of polynomials, which depends on $x$ or $y$, then each factor of $Q_i'$ lies in $\mathcal{X}$.
Hence $Q_i \in \mathcal{X}$. 

If $Q_i$ has degree at most two, then it is a plane or a hyperboloid with one sheet. 
Otherwise, the line $\ell \in \mathbb{CP}^3$ defined by $z = w = 0$ lies on $Z(Q_i)$, because $\deg Q_i \geq 3$ and degree on variables \( x \) and \( y \) in \( Q_i \) does not exceed two. 
Consider the following two cases.

\textit{Case 1:} $\ell$ is exceptional.
Since planes containing \(\ell\) are parallel to \(Oxy\), every line intersecting \(\ell\) is parallel to \(Oxy\).
A line that is parallel to \(Oxy\) cannot intersect all circles $\gamma_i$ and thus it cannot be a generating line.
But by Lemma \ref{intersection with everything}, for each point of $\ell$ there is a generating line passing through it,  a contradiction.

\textit{Case 2:} $\ell$ is generating. 
Let $I$ be the intersection of $\ell$ and the absolute conic.
By Theorem \ref{parameterisation}, the generating lines form an algebraic curve in Pl\"ucker space, and, therefore, there is a sequence of generating lines $\ell_i$ converging to $\ell$. 
As in the Case 1, there are no generating lines passing through \(\ell\), thus $\ell_i \cap I = \varnothing$.
For each $j \in \{1, \ldots, 5\}$, the line $\ell_i$ intersects $\gamma_j$ at some point $P_i^j$.
Each of the five points $P_i^j$ converges to one of the two points of the set $I$. 
We may assume that $P_i^1$ and $P_i^2$ converge to the same point $P \in I$. 
For \( j \in \{1, 2\}\), the sequence of lines \(PP^j_i\) converges to tangent line to \(\gamma_i\) at \(P\).
Since $\gamma_1$ and $\gamma_2$ are not coplanar, these tangent lines are distinct and the plane $PP_i^1P_i^2$  converges to the projective plane $\Omega$ containing them. 
The projective plane $\Omega$ has a unique common point with $\gamma_1$, while $\ell \subset \Omega$ intersects $\gamma_1$ by the two-point set $I$, a contradiction.

Therefore, for each $i$, the degree of $Q_i$ does not exceed two. 
\end{proof}

\section*{\large Acknowledgments}
The author acknowledge the financial support from the Ministry of Education and Science of the Russian Federation in the framework of MegaGrant no 075-15-2019-1926.
The author is grateful to Alexandr Polyanskii for fruitful and inspirational discussions, to Alexey~Balitskiy for numerous valuable comments that helped to significantly improve the presentation of the paper and to Mikhail~Skopenkov for advices on references.

\bibliographystyle{siam}
\bibliography{mybib}{}
\end{document}